\newtheorem{theorem}{Theorem}[section]
\newtheorem{proposition}[theorem]{Proposition}
\newtheorem{remark}[theorem]{Remark}
\def\section{\@startsection {section}{1}{\z@}{3.25ex plus 1ex minus
		.2ex}{1.5ex plus .2ex}{\large\bf}}
\def\subsection{\@startsection{subsection}{2}{\z@}{3.25ex plus 1ex minus
		.2ex}{1.5ex plus .2ex}{\normalsize\bf}}
\title{Upper and lower bounds for the solution of a stochastic prey-predator system with foraging arena scheme}
\author{Alberto Lanconelli\thanks{Dipartimento di Scienze Statistiche Paolo Fortunati, Università di Bologna, Bologna, Italy. \textbf{e-mail}: alberto.lanconelli2@unibo.it} \and  Ramiro Scorolli\thanks{Dipartimento di Scienze Statistiche Paolo Fortunati, Università di Bologna, Bologna, Italy. \textbf{e-mail}: ramiro.scorolli2@unibo.it}}
\date{\today}
\begin{document}
	
\maketitle
	
\bigskip
	
\begin{abstract}
We investigate some probabilistic aspects of the unique global strong solution of a two dimensional system of stochastic differential equations describing a prey-predator model perturbed by Gaussian noise. We first establish, for any fixed $t> 0$, almost sure upper and lower bounds for the components $X(t)$ and $Y(t)$ of the solution vector: these explicit estimates emphasize the interplay between the various parameters of the model and agree with the asymptotic results found in the literature. Then, standing on the aforementioned bounds, we derive upper and lower estimates for the joint moments and distribution function of $(X(t),Y(t))$.
Our analysis is based on a careful use of comparison theorems for stochastic differential equations and exploits several peculiar features of the noise driving the equation.    	
\end{abstract}
	
Key words and phrases: stochastic predator-prey models, Brownian motion, stochastic differential equations, comparison theorems, moments and distribution functions. \\
	
AMS 2000 classification:  60H10, 60H30, 92D30.\\
	
\allowdisplaybreaks

\section{Introduction}

In theoretical ecology the system of equations
\begin{align}
\begin{cases}\label{Mao system general}
\frac{dx(t)}{dt}=x(t)(a_1-b_1x(t))-c_1h(x(t),y(t))y(t),& x(0)=x;\\
\frac{dy(t)}{dt}=y(t)(-a_2-b_2y(t))+c_2 h(x(t),y(t))y(t),& y(0)=y,
\end{cases}
\end{align}
constitutes a fundamental class of models for predator-prey interaction. Here, $x(t)$ and $y(t)$ represent the population densities of prey and predator at time $t\geq 0$, respectively; $a_1$ the prey intrinsic growth rate; $a_2$ the predator intrinsic death rate; $a_1/b_1$ the carrying capacity of the ecosystem; $b_2$ the predator intraspecies competition; $h(x(t),y(t))$ the intake rate of predator; $c_2/c_1$ the trophic efficiency. We observe that equation (\ref{Mao system general}) encompasses the classic Lotka-Volterra model \cite{Lotka},\cite{Volterra} which is obtained setting $b_1=b_2=0$ and $h(x,y)=x$. \\
To catch the different features of specific environments, several choices for the so-called functional response $h(x,y)$ have been suggested in the literature; we mention, among others,
\begin{itemize}
	\item Holling II function \cite{Holling}: $h(x,y)=\frac{x}{\beta+x}$; 
	\item ratio dependent functional responses \cite{Akcakaya},\cite{Arditi}: $h(x,y)=\tilde{h}(x/y)$;
	\item foraging arena models \cite{Ahrens},\cite{Walters}: $h(x,y)=\frac{x}{\beta+\alpha_2y}$;
	\item Beddington-DeAngelis model \cite{Beddington},\cite{DeAngelis}: $h(x,y)=\frac{x}{\beta+\alpha_1 x+\alpha_2 y}$;
	\item Crowley‐Martin model \cite{Crowley}: $h(x,y)=\frac{x}{\beta+\alpha_1 x+\alpha_2 y+\alpha_3 xy}$;
	\item Hassell‐Varley model \cite{Sutherland}: $h(x,y)=\frac{x}{\alpha_1 x+\alpha_2y^m}$.
\end{itemize}
($\beta,\alpha_1,\alpha_2,\alpha_3,$ are positive real numbers, $m\in\mathbb{N}$ and $\tilde{h}:\mathbb{R}\to\mathbb{R}$ a suitable regular function). What distinguishes the Holling II function from other models is the absence of $y$; on this issue the paper \cite{Skalki} presents statistical evidence from 19 predator–prey systems that the Beddington‐DeAngelis, Crowley‐Martin and Hassell‐Varley models (whose functional responses depend on both prey and predator abundances) can provide better descriptions compared to those with Holling-type functions (see also \cite{Heath}). Moreover, as remarked in \cite{Abrams}, models based on ratio-dependent functional responses exhibit singular behaviours.

With the aim of introducing environmental noise in the model, different types of stochastic perturbation for the system (\ref{Mao system general}) have been considered and studied. Among the most common, we find the It\^o-type stochastic differential equation
  \begin{align}
  \begin{cases}\label{stochasticx Mao system general}
  dX(t)=\left[X(t)(a_1-b_1X(t))-c_1h(X(t),Y(t))Y(t)\right]dt+\sigma_1X(t)dB_1(t),& X(0)=x;\\
  dY(t)=\left[Y(t)(-a_2-b_2Y(t))+c_2h(X(t),Y(t))Y(t)\right]dt+\sigma_2Y(t)dB_2(t),& Y(0)=y,
  \end{cases}
  \end{align}
where $\{(B_1(t),B_2(t))\}_{t\geq 0}$ is a standard two dimensional Brownian motion and $\sigma_1,\sigma_2$ positive real numbers. System (\ref{stochasticx Mao system general}) tries to catch random fluctuations in the growth rate $a_1$ and death rate $a_2$. Some references in this stream of research are \cite{Mao foraging}, in the case of foraging arena schemes, \cite{Du}, \cite{Ji}, \cite{Li} treating the case of Beddington-DeAngelis functional response, and \cite{Rao} dealing with Hassell-Varley model. It is worth mentioning that all these papers are devoted to the study of global existence, uniqueness, positivity and asymptotic properties for the specific model of type (\ref{stochasticx Mao system general}) considered.  

Our investigation is focused on the system    
	\begin{align}
	\begin{cases}\label{Mao system}
	dX(t)=\left[X(t)(a_1-b_1X(t))-c_1\frac{X(t)Y(t)}{\beta+Y(t)}\right]dt+\sigma_1X(t)dB_1(t),& X(0)=x;\\
	dY(t)=\left[Y(t)(-a_2-b_2Y(t))+c_2\frac{X(t)Y(t)}{\beta+Y(t)}\right]dt+\sigma_2Y(t)dB_2(t),& Y(0)=y,
	\end{cases}
	\end{align}
which is proposed and analysed in \cite{Mao foraging}. It corresponds to equation (\ref{stochasticx Mao system general}) with a foraging arena functional response. It is proved in \cite{Mao foraging} that system (\ref{Mao system}) possesses a unique global strong solution $\{(X(t),Y(t))\}_{t\geq 0}$ fulfilling the condition
\begin{align*}
\mathbb{P}(X(t)>0\mbox{ and }Y(t)>0,\mbox{ for all $t\geq 0$})=1.
\end{align*}
Moreover, the authors investigate the asymptotic behaviours of $X(t)$ and $Y(t)$, as $t$ tends to infinity, and identify three different regimes: 
\begin{itemize}
\item if $a_1<\frac{\sigma_1^2}{2}$, then  
\begin{align}\label{Mao asymp 1}
\lim_{t\to+\infty}X(t)=\lim_{t\to+\infty}Y(t)=0,
\end{align}
almost surely and exponentially fast;
\item if $\frac{\sigma_1^2}{2}<a_1<\frac{\sigma_1^2}{2}+\frac{b_1\beta a_2}{c_2}+\frac{b_1\beta \sigma_2^2}{2c_2}=:\phi$, then almost surely 
\begin{align}\label{Mao asymp 2}
\lim_{t\to+\infty}Y(t)=0,\quad\mbox{ exponentially fast}, 
\end{align}
and
\begin{align}\label{Mao asymp 2 bis}
\lim_{t\to+\infty}\frac{1}{t}\int_0^tX(r)dr=\frac{a_1-\sigma_1^2/2}{b_1};
\end{align}
\item if $a_1>\frac{\phi}{1-\sigma_2^2/2c_2-a_2/c_2}$ and $a_2+\frac{\sigma_2^2}{2}<c_2$, then system (\ref{Mao system}) has a unique stationary distribution.
\end{itemize} 
The case 
\begin{align*}
\phi<a_1<\frac{\phi}{1-\sigma_2^2/2c_2-a_2/c_2},
\end{align*}
with $a_2+\frac{\sigma_2^2}{2}<c_2$, is not investigated but the authors mention that computer simulations indicate the existence of stationary distributions for both $X(t)$ and $Y(t)$ also in that regime. 

The goal of our work is to present a novel analysis for systems of the type (\ref{stochasticx Mao system general}), which in the current study take the form (\ref{Mao system}). We derive explicit upper and lower bounds for the components $X(t)$ and $Y(t)$ of the solution of equation (\ref{Mao system}) at any fixed time $t\geq 0$. Such almost sure estimates depend solely on the parameters describing the model under investigation and the noise driving the equation. Their derivation is based on a careful use of comparison theorems for stochastic differential equations and standard stochastic calculus' tools. The estimates we obtain reflect the intrinsic interplay between the parameters of the model and enlighten the probabilistic dependence structure of $X(t)$ and $Y(t)$. We also remark that our bounds, which are valid for any fixed time $t\geq 0$, agree in the limit as $t$ tends to infinity with the asymptotic results proven in \cite{Mao foraging} and summarized above. We then utilize the previously mentioned bounds to get upper and lower estimates for the joint moments and distribution function of $(X(t),Y(t))$. We propose closed form expressions which rely on new estimates for a logistic-type stochastic differential equation.\\
It is important to remark that, while systems of the type (\ref{stochasticx Mao system general}) with Beddington-DeAngelis or Crowley-Martin or Hassell-Varley functional responses can be treated, as far as finite time analysis is concerned, with a change of measure approach, the unboundedness of $h(x,y)=\frac{x}{\beta+\alpha_2y}$, as a function of $x$, prevents from the use of a similar approach for (\ref{Mao system}). We will in fact prove in Section 3.1 below the failure of the Novikov condition for the corresponding change of measure.
    
The paper is organized as follows: Section 2 collects some auxiliary results on the solution of a logistic stochastic differential equation that plays a major role in our analysis; in Section 3 we state and prove our first main theorem: almost sure upper and lower bounds for $X(t)$ and $Y(t)$, for any $t\geq 0$. Here, we also comment on the impossibility of a change of measure approach and compare our findings with the asymptotic results from \cite{Mao foraging}; Section 4 contains our second main result, which proposes upper and lower estimates for the joint moments of $(X(t),Y(t))$; in Section 5 upper and lower bounds for the joint probability function of $(X(t),Y(t))$ constitutes our third and last main theorem; the last section contains a discussion of the result obtained in the paper and some numerical simulations of the proposed bounds.  
 
\section{Preliminary results}

In this section we will prove some auxiliary results concerning the solution of the logistic stochastic differential equation 
\begin{align}\label{logistic}
dL(t)=L(t)(a-bL(t))dt+\sigma L(t)dB(t),\quad L(0)=\lambda.
\end{align}
Here $a$, $b$, $\sigma$ and $\lambda$ are positive real numbers and $\{B(t)\}_{t\geq 0}$ is a standard one dimensional Brownian motion. It is well known (see for instance formula (4.51) in \cite{Kloeden Platen} or formula (2.1) in \cite{Jiang} for the case of time-dependent parameters) that equation (\ref{logistic}) possesses a unique global positive strong solution which can be represented as
\begin{align}\label{explicit logistic}
L(t)=\frac{\lambda e^{(a-\sigma^2/2)t+\sigma B(t)}}{1+b\int_0^t\lambda e^{(a-\sigma^2/2)r+\sigma B(r)}dr}, \quad t\geq 0.
\end{align}  
We start focusing on the asymptotic behaviour of the solution of equation (\ref{logistic}). We also refer the reader to the paper \cite{Dung} for a small time analysis of $\{L(t)\}_{t\geq 0}$.

\begin{proposition}\label{prop asymp logistic}
Let $\{L(t)\}_{t\geq 0}$ be the unique global strong solution of (\ref{logistic}). Then, 
\begin{itemize}
\item if $a<\sigma^2/2$,
\begin{align}\label{asymp logisitc}
\lim_{t\to+\infty}L(t)=0\quad \mbox{almost surely};
\end{align}
\item if $a\geq\sigma^2/2$, then $L(t)$ is recurrent on $]0,+\infty[$;
\item if $a>\sigma^2/2$, then $L(t)$ converges in distribution, as $t$ tends to infinity, to the unique stationary distribution $\mathtt{Gamma}(\frac{2a}{\sigma^2}-1,\frac{2b}{\sigma^2})$.
\end{itemize}
\end{proposition}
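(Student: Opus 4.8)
The plan is to derive the first statement directly from the explicit representation (\ref{explicit logistic}), and the remaining two from the classical scale-function and speed-measure classification of the one-dimensional diffusion $\{L(t)\}_{t\geq 0}$ on $]0,+\infty[$. For the regime $a<\sigma^2/2$, note that in (\ref{explicit logistic}) the denominator is at least $1$, so $0<L(t)\leq\lambda e^{(a-\sigma^2/2)t+\sigma B(t)}$ for every $t\geq 0$; since $B(t)/t\to 0$ almost surely by the strong law of large numbers for Brownian motion and $a-\sigma^2/2<0$, the exponent tends to $-\infty$ almost surely, whence $L(t)\to 0$ a.s. (and in fact $\limsup_{t\to+\infty}t^{-1}\log L(t)\leq a-\sigma^2/2<0$, so the decay is exponential).

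For $a\geq\sigma^2/2$ I would regard $L$ as a regular diffusion on $I=\,]0,+\infty[$ with generator $\tfrac12\sigma^2x^2\partial_x^2+x(a-bx)\partial_x$. Integrating $s''/s'=-2(a-bx)/(\sigma^2 x)$ gives the scale density $s'(x)=x^{-2a/\sigma^2}e^{2bx/\sigma^2}$ and hence the speed density $m(x)=\bigl(\sigma^2x^2 s'(x)\bigr)^{-1}=\sigma^{-2}x^{2a/\sigma^2-2}e^{-2bx/\sigma^2}$. Near $+\infty$ the scale density grows exponentially, so $\int^{+\infty}s'\,dx=+\infty$ for all parameter values; near $0$ one has $s'(x)\asymp x^{-2a/\sigma^2}$, so $\int_0 s'\,dx=+\infty$ iff $2a/\sigma^2\geq 1$. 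By the recurrence test for one-dimensional regular diffusions, $L$ is recurrent on $]0,+\infty[$ (with both endpoints inaccessible) exactly when $s(0^+)=-\infty$ and $s(+\infty)=+\infty$, i.e. exactly when $a\geq\sigma^2/2$, which gives the second bullet. For the third bullet, the total mass $\int_0^{+\infty}m(x)\,dx$ is finite iff $2a/\sigma^2-1>0$, i.e. $a>\sigma^2/2$; in that case $L$ is positive recurrent and its unique invariant law is the normalised speed measure, with density proportional to $x^{(2a/\sigma^2-1)-1}e^{-(2b/\sigma^2)x}$, i.e. the $\mathtt{Gamma}\bigl(\tfrac{2a}{\sigma^2}-1,\tfrac{2b}{\sigma^2}\bigr)$ density. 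Convergence in distribution of $L(t)$ to this law then follows from the ergodic theorem for positive recurrent one-dimensional diffusions.

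The computations here (the two integrations and the endpoint comparisons) are routine; the point that needs care is the very last step, passing from finiteness of the speed measure to genuine convergence in distribution, rather than only convergence of time-averages — this requires inaccessibility of both boundaries (ensured by $s(0^+)=-\infty$ and $s(+\infty)=+\infty$) and an appeal to the classical ergodicity theorem for such diffusions (see e.g. Karlin--Taylor or Pinsky). As a self-contained alternative for the third bullet, one can work from (\ref{explicit logistic}) directly: write
\begin{align*}
\frac{1}{L(t)}=\frac{1}{\lambda}e^{-(a-\sigma^2/2)t-\sigma B(t)}+b\int_0^t e^{-(a-\sigma^2/2)(t-r)-\sigma(B(t)-B(r))}\,dr,
\end{align*}
use the time-reversal invariance of Brownian motion on $[0,t]$ to see that the integral has the same law as $\int_0^t e^{-(a-\sigma^2/2)u-\sigma B(u)}\,du$, let $t\to+\infty$ (the first term tends to $0$ a.s., and the integral converges a.s. because $a>\sigma^2/2$), and identify the distribution of the limiting exponential functional $b\int_0^{+\infty}e^{-(a-\sigma^2/2)u-\sigma B(u)}\,du$ through Dufresne's identity; this again yields convergence of $L(t)$ in distribution to $\mathtt{Gamma}\bigl(\tfrac{2a}{\sigma^2}-1,\tfrac{2b}{\sigma^2}\bigr)$.
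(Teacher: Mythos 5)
Your argument is correct, but it is worth noting that the paper does not actually prove this proposition: it simply cites Proposition 3.3 of the reference [GVW], so your write-up is a self-contained substitute rather than a parallel of an argument in the text. Your first bullet follows immediately from the explicit formula (\ref{explicit logistic}) exactly as you say. Your scale/speed computation is accurate: $s'(x)=x^{-2a/\sigma^2}e^{2bx/\sigma^2}$ and $m(x)=\sigma^{-2}x^{2a/\sigma^2-2}e^{-2bx/\sigma^2}$, the endpoint tests give recurrence precisely for $a\geq\sigma^2/2$ and finite speed measure precisely for $a>\sigma^2/2$, and the normalised speed density is indeed the $\mathtt{Gamma}(\tfrac{2a}{\sigma^2}-1,\tfrac{2b}{\sigma^2})$ density; the passage from positive recurrence to convergence in distribution does, as you flag, require the ergodic theorem for regular one-dimensional diffusions with inaccessible boundaries, which your scale-function conditions guarantee. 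The alternative route via $1/L(t)$, time reversal and Dufresne's identity is also sound and is arguably the more instructive one in the context of this paper, since it works directly with the representation (\ref{explicit logistic}) that the authors use throughout; the only points to make explicit there are (i) that the time-reversal identity in law holds for each fixed $t$ separately (so you combine it with the almost sure vanishing of the first term via Slutsky's theorem rather than a pathwise argument), and (ii) the time-change bookkeeping in Dufresne's identity, which after substituting $u=4s/\sigma^2$ gives $\int_0^{+\infty}e^{-(a-\sigma^2/2)u-\sigma B(u)}du\overset{d}{=}\tfrac{2}{\sigma^2}\gamma^{-1}$ with $\gamma\sim\mathtt{Gamma}(\tfrac{2a}{\sigma^2}-1,1)$, hence $L(t)\Rightarrow\tfrac{\sigma^2}{2b}\gamma\sim\mathtt{Gamma}(\tfrac{2a}{\sigma^2}-1,\tfrac{2b}{\sigma^2})$ as required.
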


\begin{proof}
See Proposition 3.3 in \cite{GVW}.
\end{proof}

From formula (\ref{explicit logistic}) we see that, for any $t>0$, the random variable $L(t)$ is a function of the Geometric Brownian motion $e^{(a-\sigma^2/2)t+\sigma B(t)}$ and its integral $\int_0^te^{(a-\sigma^2/2)r+\sigma B(r)}dr$. Using the joint probability density function of the random vector
\begin{align*}
\left(e^{(a-\sigma^2/2)t+\sigma B(t)},\int_0^te^{(a-\sigma^2/2)r+\sigma B(r)}dr\right),
\end{align*} 
which can be found in \cite{Yor}, the authors of \cite{Cufaro} write down an expression for the probability density function of $L(t)$: see formula (40) there. However, the authors mention that, due to the presence of oscillating integrals, the numerical treatment of such expression is rather tricky.\\
In the next two results, instead of insisting with exact formulas, we propose upper and lower estimates for the moments $\mathbb{E}[L(t)^p]$ and distribution function $\mathbb{P}(L(t)\leq z)$; the bounds we obtain involve integrals whose numerical approximations do not present the aforementioned difficulties. We also mention the paper \cite{Caravelli} which uses an approach based on power series to approximate the moments of $L(t)$.\\
In the sequel, we will write for $t>0$ 
\begin{align*}
\mathcal{N}_{0,t}(r):=\frac{1}{\sqrt{2t}}e^{-\frac{r^2}{2t}},\quad r\in\mathbb{R}, 
\end{align*}
and
\begin{align*}
\mathcal{N}_{0,t}'(r):=\frac{d}{dr}\mathcal{N}_{0,t}(r)=-\frac{r}{t}\frac{1}{\sqrt{2t}}e^{-\frac{r^2}{2t}},\quad r\in\mathbb{R}.
\end{align*}
For notational convenience we also set
\begin{align}\label{max min}
m(t):=\inf_{r\in [0,t]}B(r)\quad\mbox{ and }\quad M(t):=\sup_{r\in [0,t]}B(r).
\end{align}

\begin{proposition}\label{moments logistic}
	Let $\{L(t)\}_{t\geq 0}$ be the unique global strong solution of (\ref{logistic}). Then, for any $p\geq 0$, we have
	\begin{align}\label{logistic upper moment}
	\mathbb{E}[L(t)^p]\leq 2k_p(t)\int_0^{+\infty}\left(1+b\lambda e^{-\sigma z}K_p(t)\right)^{-p}\mathcal{N}_{0,t}(z)dz,
	\end{align}
	and
	\begin{align}\label{logistic lower moment}
	\mathbb{E}[L(t)^p]\geq 2k_p(t)\int_0^{+\infty}\left(1+b\lambda e^{\sigma z}K_p(t)\right)^{-p}\mathcal{N}_{0,t}(z)dz,
	\end{align}
	where 
	\begin{align*}
	k_p(t):=\lambda^p e^{p(a-\sigma^2/2)t+p^2\sigma^2t/2}\quad\mbox{ and }\quad K_p(t):=\lambda\frac{e^{(a-\sigma^2/2+p\sigma^2)t}-1}{a-\sigma^2/2+p\sigma^2}.
	\end{align*}
\end{proposition}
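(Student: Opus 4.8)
The plan is to start from the closed-form representation (\ref{explicit logistic}), to remove by a Cameron--Martin/Girsanov shift the geometric Brownian motion sitting in the numerator of $L(t)$, and then to bound the time integral left in the denominator by pulling out the running extremum of the shifted Brownian path, whose distribution is explicit through the reflection principle.

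Write $W(r):=e^{(a-\sigma^2/2)r+\sigma B(r)}$, so that (\ref{explicit logistic}) gives
\begin{align*}
L(t)^p=\lambda^pe^{p(a-\sigma^2/2)t}\,\frac{e^{p\sigma B(t)}}{\bigl(1+b\lambda\int_0^tW(r)\,dr\bigr)^{p}}.
\end{align*}
Since the denominator is $\ge1$ one has $L(t)\le\lambda W(t)$, so all moments of $L(t)$ are finite and $0\le(1+b\lambda\int_0^tW(r)\,dr)^{-p}\le1$ for every $p\ge0$. Because the coefficient $p\sigma$ is deterministic, $\mathcal{E}_t:=\exp\bigl(p\sigma B(t)-\tfrac12p^2\sigma^2t\bigr)$ is a genuine martingale (Novikov's condition holds trivially here, in sharp contrast with the situation of Section~3.1), hence $d\widetilde{\mathbb P}:=\mathcal{E}_t\,d\mathbb{P}$ defines a probability measure on $\mathcal{F}_t$ under which $\widetilde B(r):=B(r)-p\sigma r$, $r\in[0,t]$, is a standard Brownian motion. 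Writing $e^{p\sigma B(t)}=e^{p^2\sigma^2t/2}\mathcal{E}_t$ and substituting $B(r)=\widetilde B(r)+p\sigma r$ one arrives at
\begin{align*}
\mathbb{E}[L(t)^p]=k_p(t)\,\widetilde{\mathbb E}\!\left[\left(1+b\lambda\int_0^te^{(a-\sigma^2/2+p\sigma^2)r+\sigma\widetilde B(r)}\,dr\right)^{-p}\right],
\end{align*}
the additional drift $p\sigma^2$ in the exponent being exactly what the shift produces.

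For the upper bound I would estimate the time integral from below: since $\widetilde B(r)\ge m(t):=\inf_{s\in[0,t]}\widetilde B(s)$ for all $r\in[0,t]$, pulling $e^{\sigma m(t)}$ out and evaluating the remaining elementary integral $\int_0^te^{(a-\sigma^2/2+p\sigma^2)r}\,dr$ shows that the denominator is at least $1+b\lambda\,e^{\sigma m(t)}K_p(t)$, as in (\ref{logistic upper moment}); as $x\mapsto x^{-p}$ is non-increasing on $(0,+\infty)$ for $p\ge0$, this yields $\mathbb{E}[L(t)^p]\le k_p(t)\,\widetilde{\mathbb E}\bigl[(1+b\lambda\,e^{\sigma m(t)}K_p(t))^{-p}\bigr]$. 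Symmetrically, bounding the integral from above via $\widetilde B(r)\le M(t):=\sup_{s\in[0,t]}\widetilde B(s)$ and invoking the monotonicity of $x\mapsto x^{-p}$ once more gives $\mathbb{E}[L(t)^p]\ge k_p(t)\,\widetilde{\mathbb E}\bigl[(1+b\lambda\,e^{\sigma M(t)}K_p(t))^{-p}\bigr]$. It remains to evaluate these two expectations: under $\widetilde{\mathbb P}$ the process $\widetilde B$ is a standard Brownian motion, so by the reflection principle $M(t)\stackrel{d}{=}|\widetilde B(t)|$ and $-m(t)\stackrel{d}{=}|\widetilde B(t)|$ with $\widetilde B(t)\sim N(0,t)$; thus $|\widetilde B(t)|$ has density $2\,\mathcal{N}_{0,t}(z)$ on $[0,+\infty)$, and substituting $m(t)=-z$ in the first estimate and $M(t)=z$ in the second produces precisely (\ref{logistic upper moment}) and (\ref{logistic lower moment}).

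Every step is elementary; the point one must be careful about is the order in which the change of measure and the two reversals of the inequality $x\mapsto x^{-p}$ are combined, together with the harmless limiting cases $a-\sigma^2/2+p\sigma^2=0$ (where $K_p(t)$ is read as its limit $\lambda t$) and $p=0$ (where both bounds collapse to $\mathbb{E}[1]=1$, using $2\int_0^{+\infty}\mathcal{N}_{0,t}(z)\,dz=1$). The only genuinely model-specific observation behind the argument is that a change of measure is available and, moreover, sharp: it isolates the exact constant $k_p(t)$, whereas crudely bounding the numerator $e^{p\sigma B(t)}$ by $e^{p\sigma M(t)}$ would both loosen the estimate and force one to handle the joint law of the pair $(m(t),M(t))$.
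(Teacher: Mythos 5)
Your proposal is correct and follows essentially the same route as the paper: start from the explicit representation (\ref{explicit logistic}), use the Cameron--Martin/Girsanov shift to absorb the factor $e^{p\sigma B(t)}$ into the constant $k_p(t)$ and add the drift $p\sigma^2$ to the exponent, bound the time integral by pulling out the running infimum (for the upper bound) or supremum (for the lower bound) of the resulting standard Brownian motion, and integrate against the reflection-principle density $2\mathcal{N}_{0,t}$ of the running extremum. Your added remarks on the degenerate cases $p=0$ and $a-\sigma^2/2+p\sigma^2=0$ are a small but welcome supplement to the paper's argument.
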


\begin{proof}
	Fix $p\geq 0$; then,
	\begin{align*}
	\mathbb{E}[L(t)^p]&=\mathbb{E}\left[\frac{\lambda^p e^{p(a-\sigma^2/2)t+p\sigma B(t)}}{\left(1+b\int_0^t\lambda e^{(a-\sigma^2/2)r+\sigma B(r)}dr\right)^p}\right]\\
	&=\mathbb{E}\left[\frac{\lambda^p e^{p(a-\sigma^2/2)t+p^2\sigma^2t/2}e^{p\sigma B(t)-p^2\sigma^2t/2}}{\left(1+b\int_0^t\lambda e^{(a-\sigma^2/2)r+\sigma B(r)}dr\right)^p}\right]\\
	&=\lambda^p e^{p(a-\sigma^2/2)t+p^2\sigma^2t/2}\mathbb{E}\left[\frac{e^{p\sigma B(t)-p^2\sigma^2t/2}}{\left(1+b\int_0^t\lambda e^{(a-\sigma^2/2)r+\sigma B(r)}dr\right)^p}\right]\\
	&=k_p(t)\mathbb{E}\left[\frac{e^{p\sigma B(t)-p^2\sigma^2t/2}}{\left(1+b\int_0^t\lambda e^{(a-\sigma^2/2)r+\sigma B(r)}dr\right)^p}\right].
	\end{align*}
	We now observe that, according to the Girsanov's theorem, for any $T>0$ the law of $\{B(t)\}_{t\in [0,T]}$ under the equivalent probability measure 
	\begin{align*}
	d\mathbb{Q}:=e^{p\sigma B(t)-p^2\sigma^2t/2}d\mathbb{P}\quad\mbox{on }\mathcal{F}_T^{B}
	\end{align*}
	coincides with the one of $\{B(t)+p\sigma t\}_{t\in [0,T]}$ under the measure $\mathbb{P}$. Therefore,
	\begin{align*}
	\mathbb{E}[L(t)^p]&=k_p(t)\mathbb{E}\left[\frac{e^{p\sigma B(t)-p^2\sigma^2t/2}}{\left(1+b\int_0^t\lambda e^{(a-\sigma^2/2)r+\sigma B(r)}dr\right)^p}\right]\\
	&=k_p(t)\mathbb{E}\left[\frac{1}{\left(1+b\int_0^t\lambda e^{(a-\sigma^2/2)r+\sigma (B(r)+p\sigma r)}dr\right)^p}\right]\\
	&=k_p(t)\mathbb{E}\left[\left(1+b\int_0^t\lambda e^{(a-\sigma^2/2)r+\sigma (B(r)+p\sigma r)}dr\right)^{-p}\right].
	\end{align*}
	Now, adopting the notation (\ref{max min}), we can estimate as
	\begin{align*}
	\mathbb{E}[L(t)^p]&=k_p(t)\mathbb{E}\left[\left(1+b\int_0^t\lambda e^{(a-\sigma^2/2)r+\sigma (B(r)+p\sigma r)}dr\right)^{-p}\right]\\
	&\geq k_p(t)\mathbb{E}\left[\left(1+be^{\sigma M(t)}\int_0^t\lambda e^{(a-\sigma^2/2+p\sigma^2)r}dr\right)^{-p}\right]\\
	&= k_p(t)\mathbb{E}\left[\left(1+be^{\sigma M(t)}K_p(t)\right)^{-p}\right],
	\end{align*}
	and similarly
	\begin{align*}
	\mathbb{E}[L(t)^p]&=k_p(t)\mathbb{E}\left[\left(1+b\int_0^t\lambda e^{(a-\sigma^2/2)r+\sigma (B(r)+p\sigma r)}dr\right)^{-p}\right]\\
	&\leq k_p(t)\mathbb{E}\left[\left(1+be^{\sigma m(t)}\int_0^t\lambda e^{(a-\sigma^2/2+p\sigma^2)r}dr\right)^{-p}\right]\\
	&= k_p(t)\mathbb{E}\left[\left(1+be^{\sigma m(t)}K_p(t)\right)^{-p}\right].
	\end{align*}
	Moreover, recalling that, for $A\in\mathcal{B}(\mathbb{R})$ and $t>0$, we have
	\begin{align*}
	\mathbb{P}(m(t)\in A)=2\int_A \mathcal{N}_{0,t}(z)\boldsymbol{1}_{]-\infty,0]}(z)dz\quad\mbox{ and }\quad\mathbb{P}(M(t)\in A)=2\int_A \mathcal{N}_{0,t}(z)\boldsymbol{1}_{[0,+\infty[}(z)dz,
	\end{align*}
	(see formula (8.2) in Chapter 2 from \cite{KS}) we can conclude that
	\begin{align*}
	\mathbb{E}[L(t)^p]&\geq k_p(t)\mathbb{E}\left[\left(1+be^{\sigma M(t)}K_p(t)\right)^{-p}\right]\\
	&=2k_p(t)\int_0^{+\infty}\left(1+be^{\sigma z}K_p(t)\right)^{-p}\mathcal{N}_{0,t}(z)dz,
	\end{align*}
	and
	\begin{align*}
	\mathbb{E}[L(t)^p]&\leq k_p(t)\mathbb{E}\left[\left(1+be^{\sigma m(t)}K_p(t)\right)^{-p}\right]\\
	&=2k_p(t)\int_0^{+\infty}\left(1+be^{-\sigma z}K_p(t)\right)^{-p}\mathcal{N}_{0,t}(z)dz.
	\end{align*}
\end{proof} 

\begin{proposition}\label{df logistic}
	Let $\{L(t)\}_{t\geq 0}$ be the unique global strong solution of (\ref{logistic}). Then, for any $z>0$ and $t>0$, we have the bounds
	\begin{align}\label{upper df logistic}
	\mathbb{P}(L(t)\leq z)\leq -2\int_{\left\{\frac{k(t)e^{\sigma u}}{1+bK(t)e^{\sigma v}}\leq z\right\}\cap\{v>0\}\cap\{u<v\}}\mathcal{N}_{0,t}'(2v-u)dudv,
	\end{align}
	and
	\begin{align}\label{lower df logistic}
	\mathbb{P}(L(t)\leq z)\geq -2\int_{\left\{\frac{k(t)e^{\sigma u}}{1+bK(t)e^{\sigma v}}\leq z\right\}\cap\{v<0\}\cap\{u>v\}}\mathcal{N}_{0,t}'(u-2v)dudv, 
	\end{align}
	with
	\begin{align*}
	k(t):=\lambda e^{(a-\sigma^2/2)t}\quad\mbox{ and }\quad  K(t):=\lambda\frac{e^{(a-\sigma^2/2)t}-1}{a-\sigma^2/2}.
	\end{align*}	
\end{proposition}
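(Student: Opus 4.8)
The plan is to sandwich $L(t)$ between two random variables that are explicit monotone functions of the pairs $(B(t),M(t))$ and $(B(t),m(t))$, and then to read off the two‑sided estimate from the classical joint law of a Brownian motion and its running extremum. Write $c:=a-\sigma^2/2$, so that $k(t)=\lambda e^{ct}$ and $K(t)=\lambda\int_0^t e^{cr}\,dr$ (the last identity being valid also in the degenerate case $c=0$). Since $\sigma>0$ and $m(t)\le B(r)\le M(t)$ for every $r\in[0,t]$, the integral in the denominator of the representation (\ref{explicit logistic}) obeys $K(t)e^{\sigma m(t)}\le\lambda\int_0^t e^{cr+\sigma B(r)}\,dr\le K(t)e^{\sigma M(t)}$; inserting this into (\ref{explicit logistic}) and writing the numerator as $k(t)e^{\sigma B(t)}$ yields the almost sure bounds
$$\frac{k(t)e^{\sigma B(t)}}{1+bK(t)e^{\sigma M(t)}}\ \le\ L(t)\ \le\ \frac{k(t)e^{\sigma B(t)}}{1+bK(t)e^{\sigma m(t)}}.$$

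Next I would pass from these pointwise bounds to the distribution function. The left inequality gives $\{L(t)\le z\}\subseteq\{k(t)e^{\sigma B(t)}(1+bK(t)e^{\sigma M(t)})^{-1}\le z\}$, whence
$$\mathbb{P}(L(t)\le z)\le\mathbb{P}\!\left(\frac{k(t)e^{\sigma B(t)}}{1+bK(t)e^{\sigma M(t)}}\le z\right),$$
and symmetrically the right inequality gives
$$\mathbb{P}(L(t)\le z)\ge\mathbb{P}\!\left(\frac{k(t)e^{\sigma B(t)}}{1+bK(t)e^{\sigma m(t)}}\le z\right).$$
Each right‑hand side is the probability that $(B(t),M(t))$, resp. $(B(t),m(t))$, lies in the Borel set $\{(u,v):k(t)e^{\sigma u}(1+bK(t)e^{\sigma v})^{-1}\le z\}$; the support constraints $u\le v$ (for the maximum) and $u\ge v$ (for the minimum) come for free from the joint law.

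To finish I would substitute the reflection‑principle density. For $v>0$ and $u<v$ the joint density of $(B(t),M(t))$ is $\frac{2(2v-u)}{t\sqrt{2\pi t}}e^{-(2v-u)^2/(2t)}=-2\,\mathcal{N}_{0,t}'(2v-u)$ (see, e.g., \cite{KS}); integrating this over the set above produces exactly (\ref{upper df logistic}). For the minimum I would use $(B(t),m(t))\stackrel{d}{=}(-B(t),-M(t))$, so that its density at $(u,v)$ equals the previous one evaluated at $(-u,-v)$, namely $-2\,\mathcal{N}_{0,t}'(u-2v)$ on $\{v<0\}\cap\{u>v\}$, and a change of variables gives (\ref{lower df logistic}).

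The argument is short, and the only delicate points are (i) recalling the joint density of $(B(t),M(t))$ correctly and recognising it as $-2\,\mathcal{N}_{0,t}'(2v-u)$, and (ii) keeping track of the integration domains $\{v>0\}\cap\{u<v\}$ and $\{v<0\}\cap\{u>v\}$ when passing from the running maximum to the running minimum through the reflection $B(\cdot)\mapsto-B(\cdot)$. Everything else reduces to the elementary monotonicity estimate of the first step and a routine measure‑theoretic manipulation.
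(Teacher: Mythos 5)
Your proposal is correct and follows essentially the same route as the paper: sandwich $L(t)$ by replacing $B(r)$ in the denominator integral of (\ref{explicit logistic}) with the running extrema $m(t)$ and $M(t)$, pass to the corresponding inclusions of events, and integrate the reflection-principle joint density of $(B(t),M(t))$ (resp.\ $(B(t),m(t))$), which is exactly $-2\,\mathcal{N}_{0,t}'(2v-u)$ on $\{v>0,\,u<v\}$ (resp.\ $-2\,\mathcal{N}_{0,t}'(u-2v)$ on $\{v<0,\,u>v\}$). The only differences are cosmetic: you derive the density of $(B(t),m(t))$ from that of $(B(t),M(t))$ via the symmetry $B\mapsto -B$ where the paper simply cites it, and you correctly note the degenerate reading of $K(t)$ when $a=\sigma^2/2$.
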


\begin{proof}
	We first prove (\ref{lower df logistic}): from (\ref{explicit logistic}) we have 
	\begin{align*}
	L(t)\geq\frac{\lambda e^{(a-\sigma^2/2)t+\sigma B(t)}}{1+be^{\sigma M(t)}\int_0^t\lambda e^{(a-\sigma^2/2)r}dr}=\frac{k(t)e^{\sigma B(t)}}{1+bK(t)e^{\sigma M(t)}}.
	\end{align*}
	The last member above is a function of the two dimensional random vector $(B(t),M(t))$, whose joint probability density function is given by the expression
	\begin{align*}
	f_{B(t),M(t)}(u,v)=
	\begin{cases}
	-2\mathcal{N}_{0,t}'(2v-u),&\mbox{ if $v>0$ and $u<v$},\\
	0,&\mbox{ otherwise}
	\end{cases}
	\end{align*}
	(see formula (8.2) in Chapter 2 from \cite{KS})
	Therefore, for any $z>0$, we obtain
	\begin{align*}
	&\mathbb{P}(L(t)\leq z)\leq\mathbb{P}\left(\frac{k(t)e^{\sigma B(t)}}{1+bK(t)e^{\sigma M(t)}}\leq z\right)\nonumber\\
	&\quad=-2\int_{\left\{\frac{k(t)e^{\sigma u}}{1+bK(t)e^{\sigma v}}\leq z\right\}\cap\{v>0\}\cap\{u<v\}}\mathcal{N}_{0,t}'(2v-u)dudv,
	\end{align*} 
	completing the proof of (\ref{lower df logistic}). Similarly,
	\begin{align*}
	L(t)&\leq\frac{\lambda e^{(a-\sigma^2/2)t+\sigma B(t)}}{1+be^{\sigma m(t)}\int_0^t\lambda e^{(a-\sigma^2/2)r}dr}=\frac{k(t)e^{\sigma B(t)}}{1+bK(t)e^{\sigma m(t)}}.
	\end{align*}
	The last member above is a function of the two dimensional random vector $(B(t),m(t))$, whose joint probability density function is given by the expression
	\begin{align*}
	f_{B(t),m(t)}(u,v)=
	\begin{cases}
	-2\mathcal{N}_{0,t}'(u-2v),&\mbox{ if $v<0$ and $u>v$},\\
	0,&\mbox{ otherwise}.
	\end{cases}
	\end{align*}
	Therefore, for any $z>0$, we obtain
	\begin{align*}
	&\mathbb{P}(L(t)\leq z)\geq\mathbb{P}\left(\frac{k(t)e^{\sigma B(t)}}{1+bK(t)e^{\sigma m(t)}}\leq z\right)\nonumber\\
	&\quad=-2\int_{\left\{\frac{k(t)e^{\sigma u}}{1+bK(t)e^{\sigma v}}\leq z\right\}\cap\{v<0\}\cap\{u>v\}}\mathcal{N}_{0,t}'(u-2v)dudv.
	\end{align*} 
	The proof is complete.
\end{proof}

\begin{remark}
	We observe that the inequality $u<v$ implies
	\begin{align*}
	\frac{k(t)e^{\sigma u}}{1+bK(t)e^{\sigma v}}\leq\frac{k(t)e^{\sigma v}}{1+bK(t)e^{\sigma v}}\leq\frac{k(t)}{bK(t)}.
	\end{align*}
	Therefore, the upper bound (\ref{upper df logistic}) becomes trivial for $z\geq\frac{k(t)}{bK(t)}$; in fact, in that case
	\begin{align*}
	\{u<v\}\Rightarrow\left\{\frac{k(t)e^{\sigma u}}{1+bK(t)e^{\sigma v}}\leq\frac{k(t)}{bK(t)}\right\}\Rightarrow\left\{\frac{k(t)e^{\sigma u}}{1+bK(t)e^{\sigma v}}\leq z\right\} 
	\end{align*}
	which yields
	\begin{align*}
	&\int_{\left\{\frac{k(t)e^{\sigma u}}{1+bK(t)e^{\sigma v}}\leq z\right\}\cap\{u>0\}\cap\{u<v\}}-2\mathcal{N}_{0,t}'(2v-u)dudv\\
	&\quad=\int_{\{v>0\}\cap\{u<v\}}-2\mathcal{N}_{0,t}'(2v-u)dudv=1.
	\end{align*}
\end{remark}

\section{First main theorem: almost sure bounds}

Our first main theorem provides explicit almost sure upper and lower bounds for the solution of (\ref{Mao system}) at any given time $t$. It is useful to introduce the following notation: let 
\begin{align}\label{def L_1}
L_1(t):=\frac{G_1(t)}{1+b_1\int_0^tG_1(r)dr},\quad t\geq 0,
\end{align}
and
\begin{align}\label{def L_2}
L_2(t):=\frac{G_2(t)}{1+b_2\int_0^tG_2(r)dr},\quad t\geq 0,
\end{align}
where for $t\geq 0$ we set
\begin{align*}
G_1(t):=xe^{(a_1-\sigma_1^2/2)t+\sigma_1B_1(t)}\quad\mbox{ and }\quad G_2(t):=ye^{-(a_2+\sigma_2^2/2)t+\sigma_2B_2(t)};
\end{align*}
the parameters $a_1,a_2,b_1,b_2,\sigma_1,\sigma_2,x,y$ are those appearing in equation (\ref{Mao system}). According to the previous section, the stochastic processes $\{L_1(t)\}_{t\geq 0}$ and  $\{L_2(t)\}_{t\geq 0}$ satisfy the equations 
\begin{align}\label{logistic 1}
dL_1(t)=L_1(t)(a_1-b_1L_1(t))dt+\sigma_1L_1(t)dB_1(t),\quad L_1(0)=x,
\end{align}
and
\begin{align}\label{logistic 2}
dL_2(t)=L_2(t)(-a_2-b_2L_2(t))dt+\sigma_2L_2(t)dB_2(t),\quad L_2(0)=y,
\end{align}  
respectively. Therefore, the two dimensional process $\{(L_1(t),L_2(t))\}_{t\geq 0}$ is the unique strong solution of system (\ref{Mao system}) when $c_1=c_2=0$, i.e. when the interaction term  $\frac{X(t)Y(t)}{\beta+Y(t)}$ is not present. 

\subsection{Comments on the use of Girsanov theorem}\label{subsection Girsanov}

We have just mentioned that, by removing the ratio $\frac{X(t)Y(t)}{\beta+Y(t)}$ from its drift, equation (\ref{Mao system}) reduces to the uncoupled system 
	\begin{align}\label{uncoupled}
	\begin{cases}
	dL_1(t)=L_1(t)(a_1-b_1L_1(t))dt+\sigma_1L_1(t)dB_1(t),& L_1(0)=x;\\
	dL_2(t)=L_2(t)(-a_2-b_2L_2(t))dt+\sigma_2L_2(t)dB_2(t),& L_2(0)=y,
	\end{cases}
	\end{align}
whose solution is explicitly represented via formulas (\ref{def L_1}) and (\ref{def L_2}). Since drift removals can in general be performed with the use of Girsanov theorem, one may wonder whether the almost sure properties of (\ref{Mao system}) can be deduced from those of (\ref{uncoupled}) under a suitable equivalent probability measure. Aim of the present subsection is to show that this not case: we are in fact going to prove that the Novikov condition corresponding to the just mentioned drift removal is not fulfilled.

First of all, we notice that system (\ref{uncoupled}) can be rewritten as
	\begin{align*}
	\begin{cases}
	dL_1(t)=L_1(t)(a_1-b_1L_1(t))dt+\sigma_1L_1(t)\left(dB_1(t)+\frac{c_1L_2(t)}{\sigma_1(\beta+L_2(t))}dt-\frac{c_1L_2(t)}{\sigma_1(\beta+L_2(t))}dt\right);\\
	L_1(0)=x;\\
	dL_2(t)=L_2(t)(-a_2-b_2L_2(t))dt+\sigma_2L_2(t)\left(dB_2(t)-\frac{c_2L_1(t)}{\sigma_2(\beta+L_2(t))}dt+\frac{c_2L_1(t)}{\sigma_2(\beta+L_2(t))}dt\right);\\
	L_2(0)=y,
	\end{cases}
	\end{align*}
	or equivalently
	\begin{align}\label{weak solution}
	\begin{cases}
	dL_1(t)=L_1(t)(a_1-b_1L_1(t))dt-c_1\frac{L_1(t)L_2(t)}{\beta+L_2(t)}dt+\sigma_1L_1(t)d\tilde{B_1}(t),& L_1(0)=x;\\
	dL_2(t)=L_2(t)(-a_2-b_2L_2(t))dt+c_2\frac{L_1(t)L_2(t)}{\beta+L_2(t)}dt+\sigma_2L_2(t)d\tilde{B_2}(t), & L_2(0)=y,
	\end{cases}
	\end{align}
	where we set
	\begin{align*}
	\tilde{B_1}(t):=B_1(t)+\int_0^t\frac{c_1L_2(r)}{\sigma_1(\beta+L_2(r))}dr,\quad t\geq 0,
	\end{align*}
	and
	\begin{align*}
	\tilde{B_2}(t):=B_2(t)-\int_0^t\frac{c_2L_1(r)}{\sigma_2(\beta+L_2(r))}dr,\quad t\geq 0.
	\end{align*}
	Now, if the Novikov condition
	\begin{align}\label{Novikov}
	\mathbb{E}\left[\exp\left\{\frac{1}{2}\int_0^T\left(\frac{c_1L_2(r)}{\sigma_1(\beta+L_2(r))}\right)^2+\left(\frac{c_2L_1(r)}{\sigma_2(\beta+L_2(r))}\right)^2dr\right\}\right]<+\infty
	\end{align}
	is satisfied for some $T>0$, then the stochastic process $\{(\tilde{B_1}(t),\tilde{B_1}(t))\}_{t\in [0,T]}$ is according to the Girsanov theorem a standard two dimensional Brownian motion on the probability space $(\Omega,\mathcal{F}_T,\mathbb{Q})$ (here $\{\mathcal{F}_t\}_{t\geq 0}$ denotes the augmented Brownian filtration) with
	\begin{align*}
	&d\mathbb{Q}:=\exp\left\{-\int_0^T\frac{c_1L_2(r)}{\sigma_1(\beta+L_2(r))}dB_1(r)-\frac{1}{2}\int_0^T\left(\frac{c_1L_2(r)}{\sigma_1(\beta+L_2(r))}\right)^2dr\right\}\\
	&\quad\times \exp\left\{\int_0^T\frac{c_2L_1(r)}{\sigma_2(\beta+L_2(r))}dB_2(r)-\frac{1}{2}\int_0^T\left(\frac{c_2L_1(r)}{\sigma_2(\beta+L_2(r))}\right)^2dr\right\}d\mathbb{P}.
	\end{align*}
	Moreover, in this case equation (\ref{weak solution}) implies that the two dimensional process $\{(L_1(t),L_2(t))\}_{t\in [0,T]}$ is a weak solution of (\ref{Mao system}) with respect to $(\Omega,\{\mathcal{F}_t\}_{t\in [0,T]},\mathbb{Q},\{(\tilde{B_1}(t),\tilde{B_1}(t))\}_{t\in [0,T]})$.\\
	We now prove that condition (\ref{Novikov}) cannot be true without additional assumptions on the parameters of our model. In fact, 
	\begin{align*}
	&\mathbb{E}\left[\exp\left\{\frac{1}{2}\int_0^T\left(\frac{c_1L_2(r)}{\sigma_1(\beta+L_2(r))}\right)^2+\left(\frac{c_2L_1(r)}{\sigma_2(\beta+L_2(r))}\right)^2dr\right\}\right]\\
	&\quad\geq \mathbb{E}\left[\exp\left\{\frac{1}{2}\int_0^T\left(\frac{c_2L_1(r)}{\sigma_2(\beta+L_2(r))}\right)^2dr\right\}\right]\\
	&\quad=\mathbb{E}\left[\exp\left\{\frac{c_2^2}{2\sigma_2^2}\int_0^T\frac{L^2_1(r)}{(\beta+L_2(r))^2}dr\right\}\right]\\
	&\quad\geq\mathbb{E}\left[\exp\left\{\frac{c_2^2}{2\sigma_2^2 \mathcal{M}_2}\int_0^TL^2_1(r)dr\right\}\right]\\
	\end{align*}
	where we introduced the notation
	\begin{align*}
	\mathcal{M}_2:=\sup_{r\in [0,T]}(\beta+L_2(r))^2.
	\end{align*}
	We now apply Jensen's inequality to the Lebesgue integral and use the identity
	\begin{align*}
	\int_0^TL_1(r)dr=\frac{1}{b_1}\ln\left(1+b_1\int_0^TG_1(r)dr\right)
	\end{align*}
	to get
	\begin{align*}
	&\mathbb{E}\left[\exp\left\{\frac{1}{2}\int_0^T\left(\frac{c_1L_2(r)}{\sigma_1(\beta+L_2(r))}\right)^2+\left(\frac{c_2L_1(r)}{\sigma_2(\beta+L_2(r))}\right)^2dr\right\}\right]\\
	&\quad\geq\mathbb{E}\left[\exp\left\{\frac{c_2^2}{2\sigma_2^2 \mathcal{M}_2}\int_0^TL^2_1(r)dr\right\}\right]\\
	&\quad=\mathbb{E}\left[\exp\left\{\frac{c_2^2T}{2\sigma_2^2 \mathcal{M}_2T}\int_0^TL^2_1(r)dr\right\}\right]\\
	&\quad\geq\mathbb{E}\left[\exp\left\{\frac{c_2^2}{2\sigma_2^2 \mathcal{M}_2T}\left(\int_0^TL_1(r)dr\right)^2\right\}\right]\\
	&\quad=\mathbb{E}\left[\exp\left\{\frac{c_2^2}{2\sigma_2^2 \mathcal{M}_2Tb_1^2}\left(\ln\left(1+b_1\int_0^TG_1(r)dr\right)\right)^2\right\}\right]\\
	&\quad\geq \mathbb{E}\left[\exp\left\{\frac{c_2^2}{2\sigma_2^2 \mathcal{M}_2Tb_1^2}\left(\ln\left(1+b_1K_1(T)e^{\sigma_1m_1(T)}\right)\right)^2\right\}\right]\\
	&\quad\geq \mathbb{E}\left[\exp\left\{\frac{c_2^2}{2\sigma_2^2 \mathcal{M}_2Tb_1^2}\left(\ln\left(b_1K_1(T)e^{\sigma_1m_1(T)}\right)\right)^2\right\}\right]\\
	&\quad=\mathbb{E}\left[\exp\left\{\frac{c_2^2}{2\sigma_2^2 \mathcal{M}_2Tb_1^2}\left(\sigma_1m_1(T)+\ln(b_1K_1(T))\right)^2\right\}\right].
	\end{align*}
	Here, we set 
	\begin{align*}
	K_1(T)=\frac{e^{(a_1-\sigma_1^2/2)T}-1}{a_1-\sigma_1^2/2}\quad\mbox{ and }\quad m_1(T):=\min_{t\in [0,T]}B_1(t).
	\end{align*}  
Using the independence between $B_1$ and $B_2$, we can write the last expectation as 
\begin{align*}
&\mathbb{E}\left[\exp\left\{\frac{c_2^2}{2\sigma_2^2 \mathcal{M}_2Tb_1^2}\left(\sigma_1m_1(T)+\ln(b_1K_1(T))\right)^2\right\}\right]\\
&\quad=\int_{\beta^2}^{+\infty}\left(\int_{-\infty}^{0}e^{\frac{C}{2Tz}\left(\sigma_1u+D\right)^2}\frac{2}{\sqrt{2\pi T}}e^{-\frac{u^2}{2T}}du\right)d\mu(z),
\end{align*}
where $\mu$ stands for the law of $\mathcal{M}_2$, $C:=\frac{c_2^2}{\sigma_2^2 b_1^2}$ and $D:=\ln(b_1K_1(T))$. It is now clear that the inner integral above is finite if and only if $z\geq C\sigma_1^2$. Since $z$ ranges in the interval $]\beta^2 ,\infty[$, we deduce that the last condition is verified for all $z\in ]\beta^2,+\infty[$ only when $\beta^2\geq C\sigma_1^2$, which in our notation means
\begin{align}\label{non go girsanov}
\beta\geq\frac{c_2\sigma_1}{b_1\sigma_2}.
\end{align} 
Therefore, if the parameters describing system (\ref{Mao system}) do not respect the bound (\ref{non go girsanov}), then inequality
\begin{align*}
&\mathbb{E}\left[\exp\left\{\frac{1}{2}\int_0^T\left(\frac{c_1L_2(r)}{\sigma_1(\beta+L_2(r))}\right)^2+\left(\frac{c_2L_1(r)}{\sigma_2(\beta+L_2(r))}\right)^2dr\right\}\right]\\
&\quad\geq 2\int_{\beta^2}^{+\infty}\left(\int_{-\infty}^{0}e^{\frac{C}{2Tz}\left(\sigma_1u+D\right)^2}\frac{1}{\sqrt{2\pi T}}e^{-\frac{u^2}{2T}}du\right)d\mu(z)=+\infty,
\end{align*}
which is valid for all $T>0$, implies the failure of Novikov condition (\ref{Novikov}). From this point of view the almost sure properties of the solution of (\ref{Mao system}) cannot be deduced from those of the uncoupled system (\ref{uncoupled}).

\begin{remark}
The functional response in the foraging arena model formally appears to be a particular case of the one that characterizes the Beddington-DeAngelis model (take $\alpha_1=0$). However, referring to the change of measure technique mentioned above, we see that the Novikov condition corresponding to the Beddington-DeAngelis model would amount at the finiteness of    
\begin{align*}
\mathbb{E}\left[\exp\left\{\frac{1}{2}\int_0^T\left(\frac{c_1L_2(r)}{\sigma_1(\beta+\alpha_1 L_1(r)+\alpha_2L_2(r))}\right)^2+\left(\frac{c_2L_1(r)}{\sigma_2(\beta+\alpha_1 L_1(r)+\alpha_2L_2(r))}\right)^2dr\right\}\right].
\end{align*}
Since the two ratios in the Lebesgue integral are upper bounded almost surely by $\frac{c_1}{\sigma_1\alpha_2}$ and $\frac{c_2}{\sigma_2\alpha_1}$, respectively, we get immediately the finiteness, for all $T>0$, of the expectation above. Therefore, in the Beddington-DeAngelis model one may utilize the change of measure approach to study almost sure properties of the solution on any finite interval of time $[0,T]$. The same reasoning applies also to the Crowley-Martin and Hassell-Varley functional responses.
\end{remark}	
	
\subsection{Statement and proof of the first main theorem}

Recall that, according to the discussion in Section 1, the quantity 
\begin{align*}
\phi:=\frac{\sigma_1^2}{2}+\frac{b_1\beta a_2}{c_2}+\frac{b_1\beta \sigma_2^2}{2c_2}
\end{align*} 
is a threshold determining the asymptotic behaviour of $X(t)$ and $Y(t)$.

\begin{theorem}\label{a.s. bounds}
	Let $\{(X(t),Y(t))\}_{t\geq 0}$ be the unique global strong solution of (\ref{Mao system}). Then, for all $t\geq 0$ the following bounds hold almost surely:
	\begin{align}\label{boundY}
	L_2(t)\leq Y(t)\leq L_2(t)\left(1+b_1\int_0^t G_1(r)dr\right)^{\frac{c_2}{\beta b_1}};
	\end{align}
	if $a_1<\phi$, then
	\begin{align}\label{boundX}
	L_1(t)e^{-\frac{c_1}{\beta b_2}\left(1+b_1\int_0^t G_1(r)dr\right)^{\frac{c_2}{\beta b_1}}\ln\left(1+b_2\int_0^t G_2(r)dr\right)}\leq X(t)\leq L_1(t);
	\end{align}
	if $a_1>\phi$, then
	\begin{align}\label{boundX2}
	L_1(t)e^{-c_1t}\leq X(t)\leq L_1(t).
	\end{align}	
\end{theorem}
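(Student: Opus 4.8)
The natural strategy is to apply comparison theorems for one-dimensional SDEs, comparing the components $X(t)$ and $Y(t)$ of the true (coupled) system against the explicitly solvable logistic processes $L_1(t)$ and $L_2(t)$. The starting observation is that the nonlinear term $\frac{X(t)Y(t)}{\beta+Y(t)}$ is nonnegative, so it acts as a \emph{sink} for $X$ and a \emph{source} for $Y$. Hence $X(t)$ should be dominated by the logistic process $L_1$ (which has that term removed from its drift), while $Y(t)$ should dominate $L_2$. This immediately suggests the upper bound $X(t)\le L_1(t)$ and the lower bound $Y(t)\ge L_2(t)$: to make these rigorous I would write both $X$ and $L_1$ (resp.\ $Y$ and $L_2$) as solutions of SDEs driven by the \emph{same} Brownian motion $B_1$ (resp.\ $B_2$), with the same diffusion coefficient $\sigma_1 x$ (resp.\ $\sigma_2 y$), and drifts that are pointwise ordered once one knows positivity of the solutions (already guaranteed by \cite{Mao foraging} and Proposition \ref{prop asymp logistic}). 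The one-dimensional comparison theorem (e.g.\ the Ikeda--Watanabe / Yamada version, which applies here since the common diffusion coefficient $\sigma_i z$ is Lipschitz away from $0$ and the processes stay positive) then yields the two ``easy'' inequalities.

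For the remaining (harder) bounds I would pass to a linearization/Gronwall-type argument on the ratios. Concretely, set $U(t):=X(t)/L_1(t)$ and $V(t):=Y(t)/L_2(t)$; by It\^o's formula (the stochastic integrals cancel because $X,L_1$ share the driving noise and multiplicative structure) one gets ODEs of the form
\begin{align*}
\frac{dU(t)}{dt}=-\,c_1\,U(t)\,\frac{Y(t)}{\beta+Y(t)},\qquad
\frac{dV(t)}{dt}=\,c_2\,V(t)\,\frac{X(t)}{\beta+Y(t)},
\end{align*}
with $U(0)=V(0)=1$, modulo the extra $b_i$-terms that I expect to collapse after using the explicit representations $L_i(t)=G_i(t)/(1+b_i\int_0^t G_i)$. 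Solving these linear (pathwise) ODEs gives
\begin{align*}
U(t)=\exp\!\Big(-c_1\!\int_0^t\frac{Y(r)}{\beta+Y(r)}\,dr\Big),\qquad
V(t)=\exp\!\Big(c_2\!\int_0^t\frac{X(r)}{\beta+Y(r)}\,dr\Big).
\end{align*}
From $V(t)\ge 1$ together with $X(r)\le L_1(r)$ and the crude estimate $\frac{X(r)}{\beta+Y(r)}\le \frac{L_1(r)}{\beta}$, one bounds $V(t)\le \exp(\frac{c_2}{\beta}\int_0^t L_1(r)\,dr)$, and then the identity $\int_0^t L_1(r)\,dr=\frac{1}{b_1}\ln(1+b_1\int_0^t G_1(r)\,dr)$ turns this into the stated power $(1+b_1\int_0^t G_1)^{c_2/(\beta b_1)}$, giving the upper bound for $Y(t)$ in \eqref{boundY}.

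The lower bound for $X(t)$ is the most delicate part and is where the dichotomy $a_1\lessgtr\phi$ enters. One has $X(t)=L_1(t)\,U(t)$ with $U(t)=\exp(-c_1\int_0^t\frac{Y(r)}{\beta+Y(r)}\,dr)\ge \exp(-\frac{c_1}{\beta}\int_0^t Y(r)\,dr)$, so everything reduces to bounding $\int_0^t Y(r)\,dr$ from above. Using the just-established upper bound $Y(r)\le L_2(r)(1+b_1\int_0^r G_1)^{c_2/(\beta b_1)}$, pulling the increasing factor out at time $t$, and then invoking $\int_0^t L_2(r)\,dr=\frac{1}{b_2}\ln(1+b_2\int_0^t G_2(r)\,dr)$ yields exactly the exponent in \eqref{boundX}. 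For \eqref{boundX2}, when $a_1>\phi$ the cruder bound $\frac{Y(r)}{\beta+Y(r)}\le 1$ already gives $U(t)\ge e^{-c_1 t}$ with no integrability issue; the point of separating the cases is that the refined estimate in \eqref{boundX} is only useful (finite and sharp) in a regime where $\int_0^t G_1$ and $\int_0^t G_2$ are controlled, whereas for large $a_1$ the growth of $G_1$ makes the exponent in \eqref{boundX} blow up and one must fall back on the trivial $e^{-c_1 t}$. The main obstacle I anticipate is justifying the It\^o computation for the ratios $U,V$ cleanly (ensuring the processes never hit $0$ so that division is legitimate, and that the stochastic terms genuinely cancel), and then carefully tracking the nested integrals so that the two $b_i$-logarithm identities fit together to produce the precise exponents displayed in \eqref{boundX} and \eqref{boundY}.
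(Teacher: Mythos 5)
Your proposal is correct and follows essentially the same route as the paper: the one-sided comparison-theorem bounds $X(t)\leq L_1(t)$ and $Y(t)\geq L_2(t)$, It\^o's formula applied to the ratios $X/L_1$ and $Y/L_2$ (whose martingale parts cancel), the two identities $\int_0^t L_i(r)dr=\tfrac{1}{b_i}\ln(1+b_i\int_0^t G_i(r)dr)$, and the two affine majorants $y/(\beta+y)\leq y/\beta$ versus $y/(\beta+y)\leq 1$ to produce the dichotomy between \eqref{boundX} and \eqref{boundX2}. The one inaccuracy is that the $b_i$-terms do not collapse: the exact identities are $X(t)/L_1(t)=\exp\{b_1\int_0^t(L_1-X)dr-c_1\int_0^t\frac{Y}{\beta+Y}dr\}$ and $Y(t)/L_2(t)=\exp\{b_2\int_0^t(L_2-Y)dr+c_2\int_0^t\frac{X}{\beta+Y}dr\}$, but since $L_1\geq X$ and $L_2\leq Y$ these extra exponents have the favorable sign and are simply discarded, which is exactly what the paper does.
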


\begin{remark}
We assumed at the beginning of this manuscript that the Brownian motions $\{B_1(t)\}_{t\geq 0}$ and $\{B_2(t)\}_{t\geq 0}$, driving the two dimensional system (\ref{Mao system}), are independent. However, this assumption is not needed in the derivation of the almost sure bounds stated above, as long as system (\ref{Mao system}) possesses a positive global strong solution. Therefore, the estimates (\ref{boundY}), (\ref{boundX}) and (\ref{boundX2}) remain true in the case of correlated Brownian motions as well.  
\end{remark}

\begin{remark}\label{comparison with asymptotic}
The bounds in Theorem \ref{a.s. bounds} are consistent with the asymptotic results obtained in \cite{Mao foraging}. In fact: 
\begin{itemize}
\item $a_1<\frac{\sigma_1^2}{2}$: taking the limit as $t$ tends to infinity in the second inequality of (\ref{boundX}) we get
\begin{align*}
0\leq \lim_{t\to +\infty}X(t)\leq \lim_{t\to +\infty}L_1(t),
\end{align*}
which, in combination with (\ref{asymp logisitc}) for $L_1$, gives
\begin{align*}
\lim_{t\to +\infty}X(t)=0.
\end{align*}
On the other hand, if we take the limit in (\ref{boundY}) we obtain  
\begin{align*}
0\leq\lim_{t\to +\infty}Y(t)\leq \lim_{t\to +\infty}L_2(t)\left(1+b_1\int_0^t G_1(r)dr\right)^{\frac{c_2}{\beta b_1}}.
\end{align*}
According to formula 1.8.4 page 612 in \cite{BS} the random variable $\int_0^{+\infty}G_1(r)dr$ is finite almost surely; this fact and (\ref{asymp logisitc}) for $L_2$ yield
\begin{align*}
\lim_{t\to +\infty}Y(t)=0,
\end{align*}
completing the proof of (\ref{Mao asymp 1});
\item $\frac{\sigma_1^2}{2}<a_1<\phi=\frac{\sigma_1^2}{2}+\frac{b_1\beta a_2}{c_2}+\frac{b_1\beta \sigma_2^2}{2c_2}$: first of all, we write 
\begin{align*}
L_2(t)\leq G_2(t)=e^{-(a_2+\sigma_2^2/2)t+\sigma_2B_2(t)};
\end{align*}
moreover, since 
\begin{align*}
\int_0^tG_1(r)ds\leq e^{\sigma_1 M_1(t)}K_1(t), 
\end{align*}
where $M_1(t):=\max_{t\in [0,t]}B_1(r)$ and
\begin{align*}
K_1(t):=x\frac{e^{(a_1-\sigma_1^2/2)t}-1}{a_1-\sigma_1^2/2},
\end{align*}
we get
\begin{align*}
\left(1+b_1\int_0^t G_1(r)dr\right)^{\frac{c_2}{\beta b_1}}&\leq \left(1+b_1e^{\sigma_1 M_1(t)}K_1(t)\right)^{\frac{c_2}{\beta b_1}}\\
&\leq \left(1+Ce^{\sigma_1 M_1(t)}e^{(a_1-\sigma_1^2/2)t}\right)^{\frac{c_2}{\beta b_1}},
\end{align*}
for a suitable positive constant $C$. Therefore, 
\begin{align}\label{par}
&L_2(t)\left(1+b_1\int_0^t G_1(r)dr\right)^{\frac{c_2}{\beta b_1}}\leq e^{-(a_2+\sigma_2^2/2)t+\sigma_2B_2(t)}\left(1+Ce^{\sigma_1 M_1(t)}e^{(a_1-\sigma_1^2/2)t}\right)^{\frac{c_2}{\beta b_1}}\nonumber\\
&\quad=\left(e^{-\frac{(a_2+\sigma_2^2/2)\beta b_1}{c_2}t+\frac{\sigma_2\beta b_1}{c_2}B_2(t)}+Ce^{-\frac{(a_2+\sigma_2^2/2)\beta b_1}{c_2}t+\frac{\sigma_2\beta b_1}{c_2}B_2(t)}e^{\sigma_1 M_1(t)}e^{(a_1-\sigma_1^2/2)t}\right)^{\frac{c_2}{\beta b_1}}\nonumber\\
&\quad=\left(e^{-\frac{(a_2+\sigma_2^2/2)\beta b_1}{c_2}t+\frac{\sigma_2\beta b_1}{c_2}B_2(t)}+Ce^{\left(a_1-\sigma_1^2/2-\frac{(a_2+\sigma_2^2/2)\beta b_1}{c_2}\right)t+\frac{\sigma_2\beta b_1}{c_2}B_2(t)}e^{\sigma_1 M_1(t)}\right)^{\frac{c_2}{\beta b_1}}.
\end{align}
Recalling that
\begin{align*}
\mathbb{P}\left(\lim_{t\to+\infty}\frac{B(t)}{t}=0\right)=\mathbb{P}\left(\lim_{t\to+\infty}\frac{M_1(t)}{t}=0\right)=1,
\end{align*}
(see for instance \cite{Mao book}), we can say that both terms inside the parenthesis in (\ref{par}) will tend to zero as $t$ tends to infinity if the constants multiplying $t$ in the exponentials are negative. While this is obvious for the first exponential, the negativity of the constant 
\begin{align*}
a_1-\sigma_1^2/2-\frac{(a_2+\sigma_2^2/2)\beta b_1}{c_2}
\end{align*}
is equivalent to the condition $a_1<\phi$, i.e. the regime under consideration. Hence, passing to the limit in (\ref{boundY}), we conclude that
\begin{align*}
\lim_{t\to+\infty}Y(t)=0;
\end{align*}
this corresponds to (\ref{Mao asymp 2}). In addition, from (\ref{boundX}) we obtain 
\begin{align*}
\lim_{t\to +\infty}\frac{1}{t}\int_0^tX(r)dr\leq\lim_{t\to +\infty}\frac{1}{t}\int_0^tL_1(r)dr=\frac{a_1-\sigma_1^2/2}{b_1}.
\end{align*}
Here, we utilized Proposition \ref{prop asymp logistic} for $L_1$ with $a_1>\sigma_1^2/2$, in particular the ergodic property
\begin{align*}
\lim_{t\to +\infty}\frac{1}{t}\int_0^tL_1(r)dr=\mathbb{E}[L_{\infty}],
\end{align*}
with $\mathbb{E}[L_{\infty}]$ being the expectation of the unique stationary distribution. This partially proves (\ref{Mao asymp 2 bis}). 
\end{itemize}
\end{remark}

\begin{proof}
We start finding the It\^o's differential of the stochastic process $\frac{1}{L_1(t)}$: 
\begin{align*}
d\frac{1}{L_1(t)}&=-\frac{1}{L_1^2(t)}dL_1(t)+\frac{1}{L_1^3(t)}d\langle L_1\rangle_t\\
&=-\frac{a_1-b_1L_1(t)}{L_1(t)}dt-\frac{\sigma_1}{L_1(t)}dB_1(t)+\frac{\sigma_1^2}{L_1(t)}dt\\
&=\frac{\sigma_1^2-a_1+b_1L_1(t)}{L_1(t)}dt-\frac{\sigma_1}{L_1(t)}dB_1(t).
\end{align*}
Combining this expression with the first equation in (\ref{Mao system}) we get
\begin{align*}
d \frac{X(t)}{L_1(t)}=& X(t)d\frac{1}{L_1(t)}+\frac{1}{L_1(t)}dX(t)+d\left\langle X, 1/L_1\right\rangle (t)\\
=& X(t)\left(\frac{\sigma_1^2-a_1+b_1L_1(t)}{L_1(t)}dt-\frac{\sigma_1}{L_1(t)}dB_1(t)\right)\\
&+\frac{1}{L_1(t)}\left[X(t)\left(a_1-b_1X(t)-\frac{c_1Y(t)}{\beta+Y(t)}\right)dt+\sigma_1 X(t)dB_1(t)\right]\\
&-\sigma_1^2\frac{X(t)}{L_1(t)}dt\\
=&\frac{X(t)}{L_1(t)}\left[\sigma_1^2-a_1+b_1L_1(t)+a_1-b_1X(t)-\frac{c_1Y(t)}{\beta+Y(t)}-\sigma_1^2\right]dt\\
=&\frac{X(t)}{L_1(t)}\left[b_1(L_1(t)-X(t))-\frac{c_1Y(t)}{\beta+Y(t)}\right]dt.
\end{align*}
Since $\frac{X(0)}{L_1(0)}=1$, the last chain of equalities implies
\begin{align}\label{c}
\frac{X(t)}{L_1(t)}=\exp\left\{b_1\int_0^t(L_1(r)-X(r))dr-c_1\int_0^t\frac{Y(r)}{\beta+Y(r)}dr\right\}.
\end{align}
Following the previous reasoning we also find that 
\begin{align*}
d\frac{1}{L_2(t)}&=-\frac{1}{L_2^2(t)}dL_2(t)+\frac{1}{L_2^3(t)}d\langle L_2\rangle_t\\
&=-\frac{-a_2-b_2L_2(t)}{L_2(t)}dt-\frac{\sigma_2}{L_2(t)}dB_2(t)+\frac{\sigma_2^2}{L_2(t)}dt\\
&=\frac{\sigma_2^2+a_2+b_2L_2(t)}{L_2(t)}dt-\frac{\sigma_2}{L_2(t)}dB_2(t).
\end{align*}
Combining this expression with the second equation in (\ref{Mao system}) we get
\begin{align*}
d \frac{Y(t)}{L_2(t)}=& Y(t)d\frac{1}{L_2(t)}+\frac{1}{L_2(t)}dY(t)+d\left\langle Y, 1/L_2\right\rangle (t)\\
=& Y(t)\left(\frac{\sigma_2^2+a_2+b_2L_2(t)}{L_2(t)}dt-\frac{\sigma_2}{L_2(t)}dB_2(t)\right)\\
&+\frac{1}{L_2(t)}\left[Y(t)\left(-a_2-b_2X(t)+\frac{c_2X(t)}{\beta+Y(t)}\right)dt+\sigma_2 Y(t)dB_2(t)\right]\\
&-\sigma_2^2\frac{Y(t)}{L_2(t)}dt\\
=&\frac{Y(t)}{L_2(t)}\left[\sigma_2^2+a_2+b_2L_2(t)-a_2-b_2Y(t)+\frac{c_2X(t)}{\beta+Y(t)}-\sigma_2^2\right]dt\\
=&\frac{Y(t)}{L_2(t)}\left[b_2(L_2(t)-Y(t))+\frac{c_2X(t)}{\beta+Y(t)}\right]dt.
\end{align*}
Since $\frac{Y(0)}{L_2(0)}=1$, the last chain of equalities implies
\begin{align}\label{d}
\frac{Y(t)}{L_2(t)}=\exp\left\{b_2\int_0^t(L_2(r)-Y(r))dr+c_2\int_0^t\frac{X(r)}{\beta+Y(r)}dr\right\}.
\end{align}
We now observe that 
\begin{align*}
\mathbb{P}\left(\frac{X(t)Y(t)}{\beta+Y(t)}>0\right)=1,\quad\mbox{ for any $t\geq 0$}
\end{align*}
(remember that $X(t)$ and $Y(t)$ are positive for all $t\geq 0$); therefore, by means of standard comparison theorems for SDEs (see for instance Theorem 1.1 in Chapter VI from \cite{Ikeda Watanabe}) applied to (\ref{Mao system}) we deduce that 
\begin{align}\label{e}
X(t)\leq L_1(t),\quad\mbox{for all $t\geq 0$}, 
\end{align}
and 
\begin{align}\label{f}
Y(t)\geq L_2(t),\quad\mbox{for all $t\geq 0$}, 
\end{align}
where $\{L_1(t)\}_{t\geq 0}$ and $\{L_2(t)\}_{t\geq 0}$ solve (\ref{logistic 1}) and (\ref{logistic 2}), respectively. Therefore, equation (\ref{c}) leads to
\begin{align*}
\exp\left\{-c_1\int_0^t\frac{Y(r)}{\beta+Y(r)}dr\right\}\leq \frac{X(t)}{L_1(t)}\leq 1,
\end{align*}
or equivalently,
\begin{align}\label{cc}
L_1(t)\exp\left\{-c_1\int_0^t\frac{Y(r)}{\beta+Y(r)}dr\right\}\leq X(t)\leq L_1(t),
\end{align}
while equation (\ref{d}) leads to
\begin{align*}
1\leq \frac{Y(t)}{L_2(t)}\leq\exp\left\{c_2\int_0^t\frac{X(r)}{\beta+Y(r)}dr\right\},
\end{align*}
or equivalently,
\begin{align}\label{dd}
L_2(t)\leq Y(t)\leq L_2(t)\exp\left\{c_2\int_0^t\frac{X(r)}{\beta+Y(r)}dr\right\}.
\end{align}
The lower bound in (\ref{cc}) and upper bound in (\ref{dd}) are not explicit yet since they depend on the solution itself. To solve this problem we first recall that the process $\{L_2(t)\}_{t\geq 0}$ is positive and converges almost surely to zero exponentially fast, as $t$ tends to infinity. Now, by virtue of (\ref{e}), (\ref{f}) and the infinitesimal behaviour of $L_2$, we can upper bound the right hand side in (\ref{dd}) as
\begin{align*}
L_2(t)\exp\left\{c_2\int_0^t\frac{X(r)}{\beta+Y(r)}dr\right\}&\leq L_2(t)\exp\left\{c_2\int_0^t\frac{L_1(r)}{\beta+L_2(r)}dr\right\}\\
&\leq L_2(t)\exp\left\{\frac{c_2}{\beta}\int_0^tL_1(r)dr\right\},
\end{align*}
In addition, since
\begin{align*}
L_1(t)=\frac{1}{b_1}\frac{d}{dt}\ln\left(1+b_1\int_0^tG_1(r)dr\right),
\end{align*}
the last member above can be rewritten as
\begin{align*}
L_2(t)\exp\left\{\frac{c_2}{\beta}\int_0^tL_1(r)dr\right\}&=L_2(t)\exp\left\{\frac{c_2}{\beta b_1}\ln\left(1+b_1\int_0^tG_1(r)dr\right)\right\}\\
&=L_2(t)\left(1+b_1\int_0^tG_1(r)dr\right)^{\frac{c_2}{\beta b_1}}.
\end{align*}
Combining this estimate with (\ref{dd}) we obtain (\ref{boundY}). 

\begin{figure}[H]
	\centering
	\includegraphics[width=0.7\linewidth]{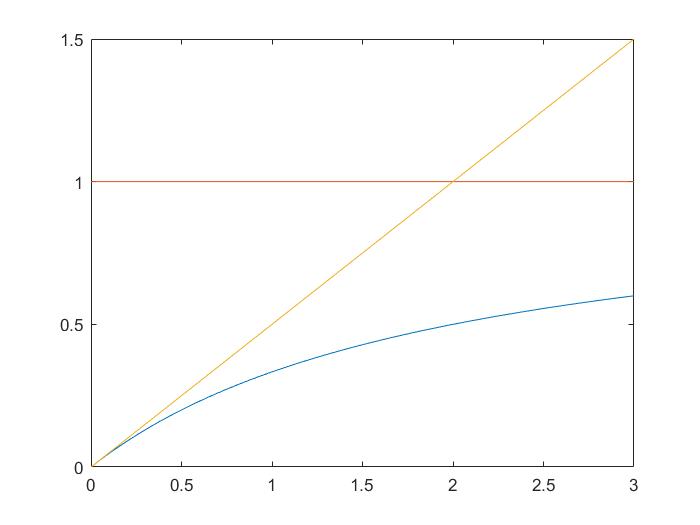}
	\caption{Upper bounds for the function $y\mapsto \frac{y}{2+y}$ (green line) with the function $y\mapsto\frac{y}{2}$ (yellow line) and the function $y\mapsto 1$ (red line).}
\end{figure}

For the lower bound in (\ref{cc}), we observe that the function $y\mapsto\frac{y}{\beta+y}$, for $y>0$, can be sharply upper bounded by affine functions in two different ways: the upper bound $y\mapsto 1$ is sharp at infinity but not accurate at zero while the upper bound $y\mapsto \frac{y}{\beta}$ is sharp at zero but very bad at infinity. Therefore, according to the asymptotic results proved in \cite{Mao foraging} and mentioned in the Introduction, we now proceed distinguishing two different regimes:
\begin{itemize}
	\item when $a_1<\phi$, the process $\{Y_t\}_{t\geq 0}$ tends to zero exponentially fast and hence we utilize the process $\frac{Y_r}{\beta}$ to upper bound $\frac{Y_r}{\beta+Y_r}$. The left hand side of (\ref{cc}) is then simplified to
\begin{align}\label{ccc}
&L_1(t)\exp\left\{-c_1\int_0^t\frac{Y(r)}{\beta+Y(r)}dr\right\}\geq L_1(t)\exp\left\{-\frac{c_1}{\beta}\int_0^tY(r)dr\right\}\nonumber\\
&\quad\geq L_1(t)\exp\left\{-\frac{c_1}{\beta}\int_0^tL_2(r)\left(1+b_1\int_0^rG_1(u)du\right)^{\frac{c_2}{\beta b_1}}dr\right\}\nonumber\\
&\quad\geq L_1(t)\exp\left\{-\frac{c_1}{\beta}\left(1+b_1\int_0^tG_1(r)dr\right)^{\frac{c_2}{\beta b_1}}\int_0^tL_2(r)dr\right\}\nonumber\\
&=L_1(t)\exp\left\{-\frac{c_1}{\beta b_2}\left(1+b_1\int_0^tG_1(r)dr\right)^{\frac{c_2}{\beta b_1}}\ln\left(1+b_2\int_0^tG_2(r)dr\right)\right\}.
\end{align}
Here, in the second inequality we utilized the upper bound in (\ref{boundY}) while in the last equality we employed the identity
\begin{align*}
L_2(t)=\frac{1}{b_2}\frac{d}{dt}\ln\left(1+b_2\int_0^tG_2(r)dr\right).
\end{align*}
Inserting (\ref{ccc}) in the left hand side of (\ref{cc}), one gets (\ref{boundX});
\item when $a_1>\phi$, the process $\{Y_t\}_{t\geq 0}$ has a more oscillatory behaviour; therefore, we prefer to upper bound the ratio $\frac{Y_r}{\beta+Y_r}$ with one. This gives
\begin{align*}
L_1(t)\exp\left\{-c_1\int_0^t\frac{Y(r)}{\beta+Y(r)}dr\right\}\geq L_1(t)e^{-c_1t},
\end{align*}
and (\ref{cc}) reduces to (\ref{boundX2}).
\end{itemize}
\end{proof}

\begin{remark}\label{all parameters}
It is important to emphasize that both the lower bounds in (\ref{boundX}) and (\ref{boundX2}) remain valid without restrictions on the parameters: this is clear from the proof of Theorem \ref{a.s. bounds} and in particular from the use of the comparison principle we made. In fact, one may combine the two lower estimates as
\begin{align*}
L_1(t)\max\left\{e^{-\frac{c_1}{\beta b_2}\left(1+b_1\int_0^t G_1(r)dr\right)^{\frac{c_2}{\beta b_1}}\ln\left(1+b_2\int_0^t G_2(r)dr\right)},e^{-c_1t}\right\}\leq X(t)\leq L_1(t),
\end{align*}
and argue on the different values attained by the maximum above. However, such analysis would necessarily involve the non directly observable quantities $\int_0^t G_1(r)dr$, $\int_0^t G_2(r)dr$ and their probabilities. That is why we preferred to suggest which lower bound is better suited for the given set of parameters.
\end{remark}

\section{Second main theorem: bounds for the moments}

The next theorem presents upper and lower estimates for the joint moments of $X(t)$ and $Y(t)$ at any given time $t$. These bounds, which rely on the almost sure inequalities (\ref{boundY}), (\ref{boundX}) and (\ref{boundX2}) are represented through closed form expressions involving Lebesgue integrals; such integrals can be evaluated via numerical approximations or Monte Carlo simulations. \\
We also mention that in \cite{Mao foraging} the authors prove an asymptotic upper bound for the moments $\mathbb{E}\left[(X(t)^2+Y(t)^2)^{\theta/2}\right]$ with $\theta$ being a positive real number.

\begin{theorem}\label{main theorem moments}
Let $\{(X(t),Y(t))\}_{t\geq 0}$ be the unique global strong solution of (\ref{Mao system}). For all $t\geq 0$ we have the following estimates:
\begin{enumerate}
\item  if $p,q\geq 0$ with $\frac{qc_2}{\beta b_1}-p\geq 1$, then
\begin{align}\label{joint upper moment}
\mathbb{E}\left[X(t)^pY(t)^q\right]\leq& 2k_{1,p}(t)k_{2,q}(t)\left(1+b_1x\frac{e^{\left(a_1+\left(\frac{qc_2}{\beta b_1}+p-1\right)\frac{\sigma_1^2}{2}\right)t}-1}{a_1+\left(\frac{qc_2}{\beta b_1}+p-1\right)\frac{\sigma_1^2}{2}}\right)^{\frac{qc_2}{\beta b_1}-p}\nonumber\\
&\quad\times \int_0^{+\infty}\left(1+b_2y e^{-\sigma_2 z}K_{2,q}(t)\right)^{-q}\mathcal{N}_{0,t}(z)dz. 
\end{align}
\item if $p,q\geq 0$ and $a_1>\phi$, then
\begin{align}\label{joint lower moment}
\mathbb{E}\left[X(t)^pY(t)^q\right]\geq& 4e^{-pc_1t} k_{1,p}(t)k_{2,q}(t)\int_0^{+\infty}\left(1+b_1x e^{\sigma_1 z}K_{1,p}(t)\right)^{-p}\mathcal{N}_{0,t}(z)dz\nonumber\\
&\quad\times\int_0^{+\infty}\left(1+b_2y e^{\sigma_2 z}K_{2,q}(t)\right)^{-q}\mathcal{N}_{0,t}(z)dz.
\end{align}
\item if $p,q\geq 0$ and $a_1<\phi$, then
\begin{align}\label{lower moments X}
&\mathbb{E}[X(t)^p]\geq -4k_1(t)^p\int_A\frac{e^{p\sigma_1u_1-\frac{pc_1}{\beta b_2}\left(1+b_1K_1(t)e^{\sigma_1v_1}\right)^{\frac{c_2}{\beta b_1}}\ln\left(1+b_2K_2(t)e^{\sigma_2v_2}\right)}}{(1+b_1K_1(t)e^{\sigma_1v_1})^p}\nonumber\\
&\quad\quad\quad\quad\quad\quad\times\mathcal{N}_{0,t}'(2v_1-u_1)\mathcal{N}_{0,t}(v_2)du_1dv_1dv_2,
\end{align}
where 
\begin{align*}
A:=\{(u_1,v_1,v_2)\in\mathbb{R}^3: v_1>0,u_1<v_1,v_2>0\},
\end{align*}
while
\begin{align}\label{lower moments Y}
\mathbb{E}[Y(t)^q]\geq 2k_{2,q}(t)\int_0^{+\infty}\left(1+b_2y e^{\sigma_2 z}K_{2,q}(t)\right)^{-q}\mathcal{N}_{0,t}(z)dz.
\end{align}	
\end{enumerate}
Here,
\begin{align*}
&k_1(t):=xe^{(a_1-\sigma_1^2/2)t},\quad K_1(t):=x\frac{e^{(a_1-\sigma_1^2/2)t}-1}{a_1-\sigma_1^2/2},\quad K_2(t):=y\frac{e^{(a_2-\sigma_2^2/2)t}-1}{a_2-\sigma_2^2/2},\\ 
&k_{1,p}(t):=x^p e^{p(a_1-\sigma_1^2/2)t+p^2\sigma_1^2t/2},\quad\quad K_{1,p}(t):=x\frac{e^{(a_1-\sigma_1^2/2+p\sigma_1^2)t}-1}{a_1-\sigma_1^2/2+p\sigma_1^2},\\
&k_{2,p}(t):=y^p e^{p(a_2-\sigma_2^2/2)t+p^2\sigma_2^2t/2}\quad\quad K_{2,p}(t):=y\frac{e^{(a_2-\sigma_2^2/2+p\sigma_2^2)t}-1}{a_2-\sigma_2^2/2+p\sigma_2^2}.
\end{align*}
\end{theorem}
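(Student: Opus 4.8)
The plan is to exploit the almost sure bounds from Theorem \ref{a.s. bounds} together with the techniques developed in the proof of Proposition \ref{moments logistic}. The strategy is uniform across the three cases: bound $X(t)^pY(t)^q$ pathwise by an explicit functional of $G_1$, $G_2$, and the running extrema of $B_1$ and $B_2$, then compute (or further bound) the resulting expectation using Girsanov-type change of measure to kill the martingale factor $e^{p\sigma B(t)-p^2\sigma^2t/2}$, and finally integrate against the explicit laws of $m(t)$ and $M(t)$ recorded in Proposition \ref{moments logistic} and Proposition \ref{df logistic}.

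For part (1), I would start from the upper bounds $X(t)\leq L_1(t)$ and $Y(t)\leq L_2(t)\big(1+b_1\int_0^tG_1(r)dr\big)^{c_2/(\beta b_1)}$ in (\ref{boundY}). Writing $L_1(t)^p=G_1(t)^p\big(1+b_1\int_0^tG_1\big)^{-p}$ and $L_2(t)^q=G_2(t)^q\big(1+b_2\int_0^tG_2\big)^{-q}$, the product $X(t)^pY(t)^q$ is bounded by $G_1(t)^pG_2(t)^q\big(1+b_1\int_0^tG_1\big)^{qc_2/(\beta b_1)-p}\big(1+b_2\int_0^tG_2\big)^{-q}$. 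The assumption $\frac{qc_2}{\beta b_1}-p\geq 1$ makes the first parenthetical exponent nonnegative, so I can bound $\big(1+b_1\int_0^tG_1\big)$ from above using $\int_0^tG_1\leq e^{\sigma_1M_1(t)}\int_0^txe^{(a_1-\sigma_1^2/2)r}dr$ --- but with a twist: the factor $G_1(t)^p$ must be absorbed first. The clean way is to move $e^{p\sigma_1B_1(t)-p^2\sigma_1^2t/2}$ and $e^{q\sigma_2B_2(t)-q^2\sigma_2^2t/2}$ into the measure via independent Girsanov shifts $B_1\rightsquigarrow B_1+p\sigma_1\cdot$ and $B_2\rightsquigarrow B_2+q\sigma_2\cdot$; after the shift the remaining $G_1$-integral has exponent $a_1-\sigma_1^2/2+\big(\frac{qc_2}{\beta b_1}-1\big)\sigma_1^2$ inside, once one also accounts for the shift hitting $\int_0^tG_1$ inside the positive power --- this is exactly what produces the exponent $a_1+\big(\frac{qc_2}{\beta b_1}+p-1\big)\frac{\sigma_1^2}{2}$ appearing in (\ref{joint upper moment}). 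Bounding the shifted $G_1$-integral by its sup (via $M_1(t)$, which under the shifted law still has the one-sided Gaussian density up to the constant $2$) and the shifted $G_2$-integral from below by its value at $m_2(t)$ gives $K_{2,q}(t)$ in the denominator; integrating the $m_2$-term against $2\mathcal{N}_{0,t}\mathbf 1_{(-\infty,0]}$ and substituting $z\mapsto -z$ yields the stated integral. The constant $k_{1,p}(t)k_{2,q}(t)$ collects the deterministic prefactors.

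For part (2), under $a_1>\phi$ I use the two-sided bounds $L_1(t)e^{-c_1t}\leq X(t)\leq L_1(t)$ from (\ref{boundX2}) and $L_2(t)\leq Y(t)$ from (\ref{boundY}); since $B_1$ and $B_2$ are independent, $\mathbb{E}[X(t)^pY(t)^q]\geq e^{-pc_1t}\mathbb{E}[L_1(t)^p]\,\mathbb{E}[L_2(t)^q]$, and the two factors are handled \emph{verbatim} by the lower bound (\ref{logistic lower moment}) of Proposition \ref{moments logistic} applied to (\ref{logistic 1}) and (\ref{logistic 2}) respectively; the product of the two $2k(t)\int_0^\infty(\cdots)\mathcal{N}_{0,t}$ expressions gives the factor $4$ in (\ref{joint lower moment}). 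For part (3) under $a_1<\phi$: the $Y$-marginal bound (\ref{lower moments Y}) is again immediate from $Y(t)\geq L_2(t)$ and (\ref{logistic lower moment}). The $X$-marginal bound (\ref{lower moments X}) is the delicate one: I start from the lower bound in (\ref{boundX}), which is $L_1(t)$ times $\exp\{-\frac{c_1}{\beta b_2}(1+b_1\int_0^tG_1)^{c_2/(\beta b_1)}\ln(1+b_2\int_0^tG_2)\}$. To make this explicit I bound $1+b_1\int_0^tG_1\leq 1+b_1K_1(t)e^{\sigma_1M_1(t)}$ and $1+b_2\int_0^tG_2\leq 1+b_2K_2(t)e^{\sigma_2M_2(t)}$ inside the (negative) exponent, and simultaneously write $L_1(t)^p=x^pe^{p(a_1-\sigma_1^2/2)t}e^{p\sigma_1B_1(t)}(1+b_1\int_0^tG_1)^{-p}\geq k_1(t)^pe^{p\sigma_1B_1(t)}(1+b_1K_1(t)e^{\sigma_1M_1(t)})^{-p}$, so that the whole lower bound becomes a function of the triple $(B_1(t),M_1(t),M_2(t))$. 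By independence this reduces to integrating against the joint density $f_{B_1(t),M_1(t)}(u_1,v_1)=-2\mathcal{N}_{0,t}'(2v_1-u_1)\mathbf 1_{\{v_1>0,u_1<v_1\}}$ (from Proposition \ref{df logistic}) and the density $2\mathcal{N}_{0,t}(v_2)\mathbf 1_{\{v_2>0\}}$ of $M_2(t)$; the two factors of $2$ combine into the $4$ in (\ref{lower moments X}), and the region becomes exactly $A$.

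The main obstacle is the bookkeeping in part (1): one must take care that the Girsanov shift $B_1\mapsto B_1+p\sigma_1\cdot$ is applied \emph{after} the factor $\big(1+b_1\int_0^tG_1\big)^{qc_2/(\beta b_1)-p}$ has been pulled out, so that the shift acts on $G_1(t)^p$ and on the $G_1$-integral inside the positive power in a consistent way --- this is what fixes the precise exponent $a_1+\big(\frac{qc_2}{\beta b_1}+p-1\big)\frac{\sigma_1^2}{2}$ in the first parenthetical factor of (\ref{joint upper moment}), and getting the sign and the combination $+p-1$ right (rather than $-p-1$ or $+p+1$) requires care. A secondary subtlety is that the $M_1$-bound for the positive power in part (1) and the $M_1,M_2$-bounds in part (3) go in the direction of \emph{increasing} the integrand, which is fine for a lower bound in (\ref{lower moments X}) only because that exponent is negative; one must double-check each monotonicity step against whether an upper or lower bound is being proved. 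Everything else --- the change-of-measure identities, the reflection-principle densities, the substitution $z\mapsto-z$ on the half-line --- is routine and already appears in Section 2.
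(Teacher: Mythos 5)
Parts (2) and (3) of your proposal track the paper's argument essentially verbatim and are correct: factor by independence, apply the lower moment bound (\ref{logistic lower moment}) of Proposition \ref{moments logistic} to each logistic factor, and for (\ref{lower moments X}) reduce the pathwise lower bound of (\ref{boundX}) to an explicit function of $(B_1(t),M_1(t),M_2(t))$ and integrate against the product of the reflection density of $(B_1(t),M_1(t))$ and the one-sided Gaussian density of $M_2(t)$; your monotonicity checks there are exactly the ones needed.

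Part (1), however, has a genuine gap. After the Girsanov shift one is left with $\mathcal{I}_1=k_{1,p}(t)\,\mathbb{E}\bigl[\bigl(1+b_1\int_0^tG_1(r)e^{p\sigma_1^2r}dr\bigr)^{s}\bigr]$ with $s=\frac{qc_2}{\beta b_1}-p$, and the stated bound (\ref{joint upper moment}) replaces this expectation by the purely deterministic quantity $\bigl(1+b_1\int_0^t\|G_1(r)\|_{\mathbb{L}^s(\Omega)}e^{p\sigma_1^2r}dr\bigr)^{s}$. The paper obtains this via the triangle inequality and Minkowski's integral inequality for the $\mathbb{L}^{s}(\Omega)$ norm, which is precisely where the hypothesis $s\geq 1$ enters (it makes $\|\cdot\|_{\mathbb{L}^s}$ a norm); computing $\|G_1(r)\|_{\mathbb{L}^s}=xe^{(a_1-\sigma_1^2/2+s\sigma_1^2/2)r}$ and combining with the shift factor $e^{p\sigma_1^2r}$ yields the exponent $a_1+(\frac{qc_2}{\beta b_1}+p-1)\frac{\sigma_1^2}{2}$. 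You instead read the hypothesis as merely making the exponent nonnegative and propose to bound $\int_0^tG_1$ by $e^{\sigma_1M_1(t)}$ times a deterministic integral and then integrate over the law of the running maximum. This cannot produce (\ref{joint upper moment}): it would leave an extra $z$-integral in the first factor, which is absent from the stated bound, and after the shift $B_1\rightsquigarrow B_1+p\sigma_1\,\cdot$ the relevant running maximum is that of a Brownian motion \emph{with drift}, whose law is not $2\mathcal{N}_{0,t}\boldsymbol{1}_{[0,+\infty[}$ --- your parenthetical claim to the contrary is false. Your intermediate exponent $a_1-\sigma_1^2/2+(\frac{qc_2}{\beta b_1}-1)\sigma_1^2$ also does not equal the target, which confirms the step was not actually carried through. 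The repair is to replace the running-maximum bound in part (1) by the Minkowski argument above.
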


\begin{proof}
\begin{enumerate}	
\item Using (\ref{boundY}) and (\ref{boundX}) (or (\ref{boundX2})), we can write 
\begin{align*}
\mathbb{E}\left[X(t)^pY(t)^q\right]\leq&\mathbb{E}\left[L_1(t)^p L_2(t)^q\left(1+b_1\int_0^tG_1(r)dr\right)^{\frac{qc_2}{\beta b_1}}\right]\\
=&\mathbb{E}\left[L_1(t)^p\left(1+b_1\int_0^tG_1(r)dr\right)^{\frac{qc_2}{\beta b_1}}\right]\mathbb{E}\left[L_2(t)^q\right]\\
=&\mathbb{E}\left[\frac{G_1(t)^p}{\left(1+b_1\int_0^tG_1(r)dr\right)^p}\left(1+b_1\int_0^tG_1(r)dr\right)^{\frac{qc_2}{\beta b_1}}\right]\mathbb{E}\left[L_2(t)^q\right]\\
=&\mathbb{E}\left[G_1(t)^p\left(1+b_1\int_0^tG_1(r)dr\right)^{\frac{qc_2}{\beta b_1}-p}\right]\mathbb{E}\left[L_2(t)^q\right]\\
=&\mathcal{I}_1\mathcal{I}_2,
\end{align*} 
where we set
\begin{align*}
\mathcal{I}_1:=\mathbb{E}\left[G_1(t)^p\left(1+b_1\int_0^tG_1(r)dr\right)^{\frac{qc_2}{\beta b_1}-p}\right]\quad\mbox{ and }\quad\mathcal{I}_2:=\mathbb{E}\left[L_2(t)^q\right].
\end{align*}
From (\ref{logistic upper moment}) we get immediately that
\begin{align*}
\mathcal{I}_2\leq 2k_{2,q}(t)\int_0^{+\infty}\left(1+b_2y e^{-\sigma_2 z}K_{2,q}(t)\right)^{-q}\mathcal{N}_{0,t}(z)dz.
\end{align*}
Now, mimicking the proof of Proposition \ref{moments logistic} we can write
\begin{align*}
\mathcal{I}_1&=k_{1,p}(t)\mathbb{E}\left[e^{p\sigma_1B_1(t)-p^2\sigma_1^2t/2}\left(1+b_1\int_0^tG_1(r)dr\right)^{\frac{qc_2}{\beta b_1}-p}\right]\\
&=k_{1,p}(t)\mathbb{E}\left[\left(1+b_1\int_0^tG_1(r)e^{p\sigma_1^2r}dr\right)^{\frac{qc_2}{\beta b_1}-p}\right]\\
&=k_{1,p}(t)\left\|1+b_1\int_0^t G_1(r)e^{p\sigma_1^2r}dr\right\|_{\mathbb{L}^{\frac{qc_2}{\beta b_1}-p}(\Omega)}^{\frac{qc_2}{\beta b_1}-p}.
\end{align*}
Observe that the condition $\frac{qc_2}{\beta b_1}-p\geq 1$ allows for the use of triangle and Minkowski's inequalities for the norm of the space $\mathbb{L}^{\frac{qc_2}{\beta b_1}-p}(\Omega)$; therefore, we obtain
\begin{align*}
\mathcal{I}_1&=k_{1,p}(t)\left\|1+b_1\int_0^t G_1(r)e^{p\sigma_1^2r}dr\right\|_{\mathbb{L}^{\frac{qc_2}{\beta b_1}-p}(\Omega)}^{\frac{qc_2}{\beta b_1}-p}\\
&\leq k_{1,p}(t)\left(1+b_1\left\|\int_0^t G_1(r)e^{p\sigma_1^2r}dr\right\|_{\mathbb{L}^{\frac{qc_2}{\beta b_1}-p}(\Omega)}\right)^{\frac{qc_2}{\beta b_1}-p}\\
&\leq k_{1,p}(t)\left(1+b_1\int_0^t\left\| G_1(r)\right\|_{\mathbb{L}^{\frac{qc_2}{\beta b_1}-p}(\Omega)}e^{p\sigma_1^2r}dr\right)^{\frac{qc_2}{\beta b_1}-p}\\
&= k_{1,p}(t)\left(1+b_1x\frac{e^{\left(a_1+\left(\frac{qc_2}{\beta b_1}+p-1\right)\frac{\sigma_1^2}{2}\right)t}-1}{a_1+\left(\frac{qc_2}{\beta b_1}+p-1\right)\frac{\sigma_1^2}{2}}\right)^{\frac{qc_2}{\beta b_1}-p}.
\end{align*}
Combining the estimates for $\mathcal{I}_1$ and $\mathcal{I}_2$ we obtain
\begin{align*}
\mathbb{E}\left[X(t)^pY(t)^q\right]\leq& 2k_{1,p}(t)k_{2,q}(t)\left(1+b_1x\frac{e^{\left(a_1+\left(\frac{qc_2}{\beta b_1}+p-1\right)\frac{\sigma_1^2}{2}\right)t}-1}{a_1+\left(\frac{qc_2}{\beta b_1}+p-1\right)\frac{\sigma_1^2}{2}}\right)^{\frac{qc_2}{\beta b_1}-p}\\
&\quad\times\int_0^{+\infty}\left(1+b_2y e^{-\sigma_2 z}K_{2,q}(t)\right)^{-q}\mathcal{N}_{0,t}(z)dz. 
\end{align*}
 
\item From (\ref{boundY}) and  (\ref{boundX2}) we can write
\begin{align*}
\mathbb{E}\left[X(t)^pY(t)^q\right]&\geq e^{-pc_1t}\mathbb{E}\left[L_1(t)^pL_2(t)^q\right]\\ 
&=e^{-pc_1t}\mathbb{E}\left[L_1(t)^p\right]\mathbb{E}\left[L_2(t)^q\right].
\end{align*}
Inequality (\ref{logistic lower moment}) completes the proof of (\ref{joint lower moment}).  
 
\item The lower bound (\ref{lower moments Y}) is obtained setting $p=0$ in (\ref{joint lower moment}); to prove the lower bound (\ref{lower moments X}) we observe that
\begin{align}\label{ab}
X(t)&\geq L_1(t)e^{-\frac{c_1}{\beta b_2}\left(1+b_1\int_0^t G_1(v)dv\right)^{\frac{c_2}{\beta b_1}}\ln\left(1+b_2\int_0^t G_2(r)dr\right)}\nonumber\\
&=\frac{G_1(t)e^{-\frac{c_1}{\beta b_2}\left(1+b_1\int_0^t G_1(v)dv\right)^{\frac{c_2}{\beta b_1}}\ln\left(1+b_2\int_0^t G_2(r)dr\right)}}{1+b_1\int_0^t G_1(r)dr}\nonumber\\
&\geq\frac{G_1(t)e^{-\frac{c_1}{\beta b_2}\left(1+b_1K_1(t)e^{\sigma_1M_1(t)}\right)^{\frac{c_2}{\beta b_1}}\ln\left(1+b_2K_2(t)e^{\sigma_2M_2(t)}\right)}}{1+b_1K_1(t)e^{\sigma_1M_1(t)}}\nonumber\\
&=\frac{k_1(t)e^{\sigma_1B_1(t)-\frac{c_1}{\beta b_2}\left(1+b_1K_1(t)e^{\sigma_1M_1(t)}\right)^{\frac{c_2}{\beta b_1}}\ln\left(1+b_2K_2(t)e^{\sigma_2M_2(t)}\right)}}{1+b_1K_1(t)e^{\sigma_1M_1(t)}}.
\end{align}
The last member above is a function of the three dimensional random vector $(B_1(t),M_1(t),M_2(t))$ whose joint probability density function is given by
\begin{align*}
&f_{B_1(t),M_1(t),M_2(t)}(u_1,v_1,v_2)\\
&\quad=\begin{cases}
-4\mathcal{N}_{0,t}'(2v_1-u_1)\mathcal{N}_{0,t}(v_2),&\mbox{ if $v_1>0$, $u_1<v_1$ and $v_2>0$},\\
0,&\mbox{ otherwise}.
\end{cases}
\end{align*}
Therefore, for any $p\geq 0$ we get
\begin{align*}
\mathbb{E}[X(t)^p]&\geq\mathbb{E}\left[\left|\frac{k_1(t)e^{\sigma_1B_1(t)-\frac{c_1}{\beta b_2}\left(1+b_1K_1(t)e^{\sigma_1M_1(t)}\right)^{\frac{c_2}{\beta b_1}}\ln\left(1+b_2K_2(t)e^{\sigma_2M_2(t)}\right)}}{1+b_1K_1(t)e^{\sigma_1M_1(t)}}\right|^p\right]\\
&=-4k_1(t)^p\int_A\frac{e^{p\sigma_1u_1-\frac{pc_1}{\beta b_2}\left(1+b_1K_1(t)e^{\sigma_1v_1}\right)^{\frac{c_2}{\beta b_1}}\ln\left(1+b_2K_2(t)e^{\sigma_2v_2}\right)}}{(1+b_1K_1(t)e^{\sigma_1v_1})^p}\\
&\quad\quad\quad\quad\quad\quad\times\mathcal{N}_{0,t}'(2v_1-u_1)\mathcal{N}_{0,t}(v_2)du_1dv_1dv_2,
\end{align*}
where 
\begin{align*}
A:=\{(u_1,v_1,v_2)\in\mathbb{R}^3: v_1>0,u_1<v_1,v_2>0\}.
\end{align*}
This proves (\ref{lower moments X}).
\end{enumerate}
\end{proof}

\begin{remark}
Due to the complexity of the left hand side in (\ref{boundX}) we were not able to obtain a lower bound for the joint moments $\mathbb{E}\left[X(t)^pY(t)^q\right]$ in the regime $a_1<\phi$. However, according to the argument of Remark \ref{all parameters}, inequality (\ref{joint lower moment}) can be utilize also in that regime.
\end{remark}

\section{Third main theorem: bounds for the distribution functions}

The last main theorem of this paper concerns with upper and lower estimates for the distribution functions of $X(t)$ and $Y(t)$.

\begin{theorem}\label{main theorem d.f.}
Let $\{(X(t),Y(t))\}_{t\geq 0}$ be the unique global strong solution of (\ref{Mao system}). Then, for all $t\geq 0$ and $z_1,z_2>0$ we have the following bounds:
\begin{enumerate}
\item 
\begin{align}\label{lower df X}
\mathbb{P}(X(t)\leq z_1)\geq-2\int_{\left\{\frac{k_1(t)e^{\sigma u}}{1+b_1K_1(t)e^{\sigma_1 v}}\leq z_1\right\}\cap\{v>0\}\cap\{u<v\}}\mathcal{N}_{0,t}'(2v-u)dudv, 
\end{align}	
and 
\begin{align}\label{lower df Y}
&\mathbb{P}(Y(t)\leq z_2)\geq-\frac{4\beta b_1}{\sigma_1 c_2}\int_0^{z_2/(1+b_1K_1(t))^{\frac{c_2}{\beta b_1}}}\left(\int_{\left\{\frac{k_2(t)e^{\sigma_2 u}}{1+b_2K_2(t)e^{\sigma_2 v}}\leq \zeta\right\}\cap\{v<0\}\cap\{u>v\}}\mathcal{N}_{0,t}'(u-2v)dudv\right)\nonumber\\
&\quad\quad\quad\quad\quad\quad\times \mathcal{N}_{0,t}\left(\frac{1}{\sigma_1}\ln\left(\frac{\left(\frac{z}{\zeta}\right)^{\frac{\beta b_1}{c_2}}-1}{b_1K_1(t)}\right)\right)\frac{\left(\frac{z}{\zeta}\right)^{\frac{\beta b_1}{c_2}}}{\left(\frac{z}{\zeta}\right)^{\frac{\beta b_1}{c_2}}-1}\frac{1}{\zeta}d\zeta;
\end{align}	
\item if $a_1>\phi$, then
\begin{align}\label{joint df upper}
\mathbb{P}\left(X(t)\leq z_1,Y(t)\leq z_2\right)&\leq 4\int_{\left\{\frac{k_1(t)e^{\sigma_1 u}}{1+b_1K_1(t)e^{\sigma_1 v}}\leq z_1e^{c_1t}\right\}\cap\{v>0\}\cap\{u<v\}}\mathcal{N}_{0,t}'(2v-u)dudv\nonumber\\
&\quad\times\int_{\left\{\frac{k_2(t)e^{\sigma_2 u}}{1+b_2K_2(t)e^{\sigma_2 v}}\leq z_2\right\}\cap\{v>0\}\cap\{u<v\}}\mathcal{N}_{0,t}'(2v-u)dudv;
\end{align}
\item if $a_1<\phi$, then
\begin{align}\label{upper df X}
\mathbb{P}(X(t)\leq z_1)\leq-4\int_{A_{z_1}\cap\{v_1>0, u_1<v_1, v_2>0\}} \mathcal{N}_{0,t}'(2v_1-u_1)\mathcal{N}_{0,t}(v_2)du_1dv_1dv_2,
\end{align}
where 
\begin{align*}
A_{z_1}:=\left\{(u_1,v_1,v_2)\in\mathbb{R}^3:\frac{k_1(t)e^{\sigma_1u_1-\frac{c_1}{\beta b_2}\left(1+b_1K_1(t)e^{\sigma_1v_1}\right)^{\frac{c_2}{\beta b_1}}\ln\left(1+b_2K_2(t)e^{\sigma_2v_2}\right)}}{1+b_1K_1(t)e^{\sigma_1v_1}}\leq z_1\right\},
\end{align*}
and
\begin{align}\label{upper df Y}
\mathbb{P}(Y(t)\leq z_2)\leq-2\int_{\left\{\frac{k_2(t)e^{\sigma u}}{1+b_2K(t)e^{\sigma_2 v}}\leq z_2\right\}\cap\{v>0\}\cap\{u<v\}}\mathcal{N}_{0,t}'(2v-u)dudv.
\end{align}	
\end{enumerate}
Here,
\begin{align*}
k_1(t):=xe^{(a_1-\sigma_1^2/2)t}\quad\mbox{ and }\quad K_1(t):=x\frac{e^{(a_1-\sigma_1^2/2)t}-1}{a_1-\sigma_1^2/2},
\end{align*}
while
\begin{align*}
k_2(t):=ye^{(a_2-\sigma_2^2/2)t}\quad\mbox{ and }\quad K_2(t):=y\frac{e^{(a_2-\sigma_2^2/2)t}-1}{a_2-\sigma_2^2/2}. 
\end{align*}
\end{theorem}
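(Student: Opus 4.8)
The plan is to convert each almost sure inequality of Theorem \ref{a.s. bounds} into a probability estimate by expressing the dominating (or dominated) random variable as an explicit function of $B_i(t)$ together with its running maximum $M_i(t):=\sup_{r\in[0,t]}B_i(r)$ or minimum $m_i(t):=\inf_{r\in[0,t]}B_i(r)$, and then integrating against the joint densities of $(B_i(t),M_i(t))$ and $(B_i(t),m_i(t))$ recalled from formula (8.2) of Chapter 2 in \cite{KS}, exactly as in the proofs of Propositions \ref{moments logistic} and \ref{df logistic}. The guiding principle: to bound $\mathbb{P}(X(t)\le z_1)$ from below I exhibit a random variable $\xi$ with $X(t)\le\xi$ almost surely, so that $\{\xi\le z_1\}\subseteq\{X(t)\le z_1\}$; to bound it from above I exhibit $\xi$ with $X(t)\ge\xi$; each time $\xi$ is chosen so that $\{\xi\le z\}$ falls in the class handled by those densities, and the independence of $B_1$ and $B_2$ is used to factor joint probabilities and to build product densities.

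Most of the statements then come out by direct substitution. For the lower bound (\ref{lower df X}) I would use $X(t)\le L_1(t)$, valid in every regime, so that $\mathbb{P}(X(t)\le z_1)\ge\mathbb{P}(L_1(t)\le z_1)$, and invoke the corresponding bound of Proposition \ref{df logistic} applied to the logistic process $L_1$, i.e. with $(a,b,\sigma,\lambda,B)=(a_1,b_1,\sigma_1,x,B_1)$. For the second item, when $a_1>\phi$ inequalities (\ref{boundX2}) and (\ref{boundY}) give $X(t)\ge L_1(t)e^{-c_1t}$ and $Y(t)\ge L_2(t)$, hence $\{X(t)\le z_1,Y(t)\le z_2\}\subseteq\{L_1(t)\le z_1e^{c_1t}\}\cap\{L_2(t)\le z_2\}$; bounding each $L_i(t)$ below by $k_i(t)e^{\sigma_iB_i(t)}/(1+b_iK_i(t)e^{\sigma_iM_i(t)})$ as in Proposition \ref{df logistic} and using the independence of $(B_1(t),M_1(t))$ from $(B_2(t),M_2(t))$, the probability factors into a product of two two-dimensional integrals, which is (\ref{joint df upper}). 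For the third item I would start from inequality (\ref{ab}) in the proof of Theorem \ref{main theorem moments}, which already presents the lower bound for $X(t)$ as an explicit function of $(B_1(t),M_1(t),M_2(t))$; its joint density is the product $-4\mathcal{N}_{0,t}'(2v_1-u_1)\mathcal{N}_{0,t}(v_2)$ on $\{v_1>0,u_1<v_1,v_2>0\}$, and integrating over the sublevel set $A_{z_1}$ gives (\ref{upper df X}); likewise $Y(t)\ge L_2(t)\ge k_2(t)e^{\sigma_2B_2(t)}/(1+b_2K_2(t)e^{\sigma_2M_2(t)})$ gives (\ref{upper df Y}).

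The only estimate that needs genuine work is the lower bound (\ref{lower df Y}) for $\mathbb{P}(Y(t)\le z_2)$. From the upper bound in (\ref{boundY}) I would bound $\int_0^tG_1(r)dr\le K_1(t)e^{\sigma_1M_1(t)}$ and $L_2(t)\le k_2(t)e^{\sigma_2B_2(t)}/(1+b_2K_2(t)e^{\sigma_2m_2(t)})$, obtaining $Y(t)\le WV$ with $W:=k_2(t)e^{\sigma_2B_2(t)}/(1+b_2K_2(t)e^{\sigma_2m_2(t)})$ a function of $(B_2(t),m_2(t))$ and $V:=(1+b_1K_1(t)e^{\sigma_1M_1(t)})^{c_2/(\beta b_1)}$ a function of $M_1(t)$, independent because $B_1$ and $B_2$ are. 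Conditioning on $W$, $\mathbb{P}(Y(t)\le z_2)\ge\mathbb{P}(WV\le z_2)=\int\mathbb{P}(V\le z_2/\zeta)\,\mathbb{P}_W(d\zeta)$ with $\mathbb{P}(V\le z_2/\zeta)=\mathbb{P}\bigl(M_1(t)\le\sigma_1^{-1}\ln\tfrac{(z_2/\zeta)^{\beta b_1/c_2}-1}{b_1K_1(t)}\bigr)$, a bound that is nontrivial precisely for $\zeta\le z_2/(1+b_1K_1(t))^{c_2/(\beta b_1)}$, which is the upper limit of the $\zeta$-integral. An integration by parts in $\zeta$ then moves the derivative off the distribution function of $M_1(t)$ — producing $\mathcal{N}_{0,t}$ evaluated at that argument times the Jacobian factor $\tfrac{(z_2/\zeta)^{\beta b_1/c_2}}{(z_2/\zeta)^{\beta b_1/c_2}-1}\tfrac1\zeta$, which up to the constant $-\sigma_1c_2/(\beta b_1)$ is the $\zeta$-derivative of that argument — and onto the distribution function of $W$, the latter being exactly the inner two-dimensional integral appearing in (\ref{lower df Y}). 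The boundary terms vanish: the $M_1(t)$-factor vanishes at the upper endpoint (where the logarithm's argument equals one), and positivity of $W$ kills the contribution at $\zeta=0$.

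I expect the hard part to be precisely this last argument: choosing the right deterministic bounds so that $Y(t)$ is dominated by a product $WV$ of two independent, explicitly distributed random variables; pinning down the effective range of $\zeta$ forced by positivity of the logarithm's argument; and justifying the integration by parts, that is the differentiability of the $M_1(t)$-distribution function in $\zeta$ and the vanishing of both boundary terms. All the remaining items are mechanical once the appropriate almost sure inequality from Theorem \ref{a.s. bounds} or from (\ref{ab}) is inserted, since each then reduces to integrating an explicit integrand against a two- or three-dimensional density quoted from \cite{KS}.
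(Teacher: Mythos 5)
Your proposal is correct and follows essentially the same route as the paper: each part is obtained by inserting the appropriate almost sure inequality from Theorem \ref{a.s. bounds} (or the intermediate bound (\ref{ab})) and integrating against the joint densities of $(B_i(t),M_i(t))$ or $(B_i(t),m_i(t))$, with the conditioning/integration-by-parts argument for (\ref{lower df Y}) matching the paper's, including the identification of the effective $\zeta$-range and the vanishing boundary terms. The only (immaterial) difference is in (\ref{lower df Y}): you replace $L_2(t)$ by its explicit dominating variable $W$ before conditioning, whereas the paper integrates against the distribution function $F_2$ of $L_2(t)$ and only substitutes the lower bound $F_2(\zeta)\geq \mathbb{P}(W\leq\zeta)$ after the integration by parts; both yield the identical formula.
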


\begin{proof}
\begin{enumerate}
	\item The upper bound in (\ref{boundX}) (or (\ref{boundX2})) yields
	\begin{align*}
	\mathbb{P}(X(t)\leq z_1)\geq\mathbb{P}(L_1(t)\leq z_1)
	\end{align*}
	which in combination with (\ref{lower df logistic}) gives (\ref{lower df X}). We now prove (\ref{lower df Y}); the estimate
	\begin{align*}
	\int_0^t G_1(r)dr\leq K_1(t)e^{\sigma_1 M_1(t)},
	\end{align*}
	together with the upper estimate in (\ref{boundY}), entails
	\begin{align*}
	\mathbb{P}(Y(t)\leq z_2)&\geq \mathbb{P}\left(L_2(t)\left(1+b_1\int_0^t G_1(r)dr\right)^{\frac{c_2}{\beta b_1}}\leq z_2\right)\\
	&=\mathbb{E}\left[\mathbb{P}\left(L_2(t)\left(1+b_1\int_0^t G_1(r)dr\right)^{\frac{c_2}{\beta b_1}}\leq z_2\bigg|\mathcal{F}_t^2\right)\right]\\
	&=\mathbb{E}\left[\mathbb{P}\left(\left(1+b_1\int_0^t G_1(r)dr\right)^{\frac{c_2}{\beta b_1}}\leq \frac{z_2}{L_2(t)}\bigg|\mathcal{F}_t^2\right)\right]\\
	&=\mathbb{E}\left[\mathbb{P}\left(\int_0^t G_1(r)dr\leq \left(\left(\frac{z_2}{L_2(t)}\right)^{\frac{\beta b_1}{c_2}}-1\right)/b_1\bigg|\mathcal{F}_t^2\right)\right]\\
	&\geq\mathbb{E}\left[\mathbb{P}\left(K_1(t)e^{\sigma_1 M_1(t)}\leq \left(\left(\frac{z_2}{L_2(t)}\right)^{\frac{\beta b_1}{c_2}}-1\right)/b_1\bigg|\mathcal{F}_t^2\right)\right]\\
	&=\mathbb{E}\left[\mathbb{P}\left(M_1(t)\leq \frac{1}{\sigma_1}\ln\left(\frac{\left(\frac{z_2}{L_2(t)}\right)^{\frac{\beta b_1}{c_2}}-1}{b_1K_1(t)}\right)\bigg|\mathcal{F}_t^2\right)\right].
	\end{align*}
	Here $\{\mathcal{F}_t^2\}_{t\geq 0}$ denotes the natural augmented filtration of the Brownian motion $\{B_2(t)\}_{t\geq 0}$. Note that the almost sure positivity of the random variable $M_1(t)$ implies that the probability in the last member above is different from zero if and only if 
	\begin{align*}
	\frac{\left(\frac{z_2}{L_2(t)}\right)^{\frac{\beta b_1}{c_2}}-1}{b_1K_1(t)}>1
	\end{align*}
	which is equivalent to say that
	\begin{align*}
	L_2(t)\leq \frac{z_2}{(1+b_1K_1(t))^{\frac{c_2}{\beta b_1}}}.
	\end{align*}
	Therefore,
	\begin{align*}
	\mathbb{P}(Y(t)\leq z_2)&\geq\mathbb{E}\left[\mathbb{P}\left(M_1(t)\leq \frac{1}{\sigma_1}\ln\left(\frac{\left(\frac{z_2}{L_2(t)}\right)^{\frac{\beta b_1}{c_2}}-1}{b_1K_1(t)}\right)\bigg|\mathcal{F}_t^2\right)\right]\\
	&=\int_0^{z_2/(1+b_1K_1(t))^{\frac{c_2}{\beta b_1}}}\mathbb{P}\left(M_1(t)\leq \frac{1}{\sigma_1}\ln\left(\frac{\left(\frac{z}{\zeta}\right)^{\frac{\beta b_1}{c_2}}-1}{b_1K_1(t)}\right)\right)dF_2(\zeta),
	\end{align*}
	where $F_2$ denotes the distribution function of the random variable $L_2(t)$. We now integrate by parts and notice that $\mathbb{P}\left(M_1(t)\leq \frac{1}{\sigma_1}\ln\left(\frac{\left(\frac{z}{\zeta}\right)^{\frac{\beta b_1}{c_2}}-1}{b_1K_1(t)}\right)\right)=0$ if $\zeta=z_2/(1+b_1K_1(t))^{\frac{c_2}{\beta b_1}}$ while $F_2(\zeta)=0$ when $\zeta=0$. This gives
	\begin{align*}
	\mathbb{P}(Y(t)\leq z_2)&\geq\int_0^{z_2/(1+b_1K_1(t))^{\frac{c_2}{\beta b_1}}}\mathbb{P}\left(M_1(t)\leq \frac{1}{\sigma_1}\ln\left(\frac{\left(\frac{z}{\zeta}\right)^{\frac{\beta b_1}{c_2}}-1}{b_1K_1(t)}\right)\right)dF_2(\zeta)\\
	&=\frac{2\beta b_1}{\sigma_1 c_2}\int_0^{z_2/(1+b_1K_1(t))^{\frac{c_2}{\beta b_1}}}F_2(\zeta)\mathcal{N}_{0,t}\left(\frac{1}{\sigma_1}\ln\left(\frac{\left(\frac{z}{\zeta}\right)^{\frac{\beta b_1}{c_2}}-1}{b_1K_1(t)}\right)\right)\frac{\left(\frac{z}{\zeta}\right)^{\frac{\beta b_1}{c_2}}}{\left(\frac{z}{\zeta}\right)^{\frac{\beta b_1}{c_2}}-1}\frac{1}{\zeta}d\zeta.
	\end{align*}
	Moreover, since from (\ref{lower df logistic}) we know that
	\begin{align*}
	F_2(\zeta)=\mathbb{P}(L_2(t)\leq \zeta)\geq -2\int_{\left\{\frac{k_2(t)e^{\sigma_2 u}}{1+b_2K_2(t)e^{\sigma_2 v}}\leq \zeta\right\}\cap\{v<0\}\cap\{u>v\}}\mathcal{N}_{0,t}'(u-2v)dudv,
	\end{align*}
	we can conclude that
	\begin{align*}
	&\mathbb{P}(Y(t)\leq z_2)\geq\frac{2\beta b_1}{\sigma_1 c_2}\int_0^{z_2/(1+b_1K_1(t))^{\frac{c_2}{\beta b_1}}}F_2(\zeta)\mathcal{N}_{0,t}\left(\frac{1}{\sigma_1}\ln\left(\frac{\left(\frac{z}{\zeta}\right)^{\frac{\beta b_1}{c_2}}-1}{b_1K_1(t)}\right)\right)\frac{\left(\frac{z}{\zeta}\right)^{\frac{\beta b_1}{c_2}}}{\left(\frac{z}{\zeta}\right)^{\frac{\beta b_1}{c_2}}-1}\frac{1}{\zeta}d\zeta\\
	&\quad\geq-\frac{4\beta b_1}{\sigma_1 c_2}\int_0^{z_2/(1+b_1K_1(t))^{\frac{c_2}{\beta b_1}}}\left(\int_{\left\{\frac{k_2(t)e^{\sigma_2 u}}{1+b_2K_2(t)e^{\sigma_2 v}}\leq \zeta\right\}\cap\{v<0\}\cap\{u>v\}}\mathcal{N}_{0,t}'(u-2v)dudv\right)\\
	&\quad\quad\quad\quad\quad\quad\times \mathcal{N}_{0,t}\left(\frac{1}{\sigma_1}\ln\left(\frac{\left(\frac{z}{\zeta}\right)^{\frac{\beta b_1}{c_2}}-1}{b_1K_1(t)}\right)\right)\frac{\left(\frac{z}{\zeta}\right)^{\frac{\beta b_1}{c_2}}}{\left(\frac{z}{\zeta}\right)^{\frac{\beta b_1}{c_2}}-1}\frac{1}{\zeta}d\zeta.
	\end{align*}
	
	\item Using the lower bounds in (\ref{boundY}) and (\ref{boundX2}) we obtain
	\begin{align*}
	\mathbb{P}\left(X(t)\leq z_1,Y(t)\leq z_2\right)&\leq\mathbb{P}\left(L_1(t)e^{-c_1t}\leq z_1, L_2(t)\leq z_2\right)\\
	&=\mathbb{P}\left(L_1(t)e^{-c_1t}\leq z_1\right)\mathbb{P}\left( L_2(t)\leq z_2\right)\\
	&=\mathbb{P}\left(L_1(t)\leq z_1e^{c_1t}\right)\mathbb{P}\left( L_2(t)\leq z_2\right).
	\end{align*}
	With the help of (\ref{upper df logistic}) we conclude that
	\begin{align*}
	\mathbb{P}\left(X(t)\leq z_1,Y(t)\leq z_2\right)&\leq 4\int_{\left\{\frac{k_1(t)e^{\sigma_1 u}}{1+b_1K_1(t)e^{\sigma_1 v}}\leq z_1e^{c_1t}\right\}\cap\{v>0\}\cap\{u<v\}}\mathcal{N}_{0,t}'(2v-u)dudv\\
	&\quad\times\int_{\left\{\frac{k_2(t)e^{\sigma_2 u}}{1+b_2K_2(t)e^{\sigma_2 v}}\leq z_2\right\}\cap\{v>0\}\cap\{u<v\}}\mathcal{N}_{0,t}'(2v-u)dudv
	\end{align*}
	
	\item We now prove (\ref{upper df X}); we know from (\ref{boundX}) and (\ref{ab}) that
\begin{align*}
X(t)&\geq L_1(t)e^{-\frac{c_1}{\beta b_2}\left(1+b_1\int_0^t G_1(v)dv\right)^{\frac{c_2}{\beta b_1}}\ln\left(1+b_2\int_0^t G_2(r)dr\right)}\\
&\geq\frac{k_1(t)e^{\sigma_1B_1(t)-\frac{c_1}{\beta b_2}\left(1+b_1K_1(t)e^{\sigma_1M_1(t)}\right)^{\frac{c_2}{\beta b_1}}\ln\left(1+b_2K_2(t)e^{\sigma_2M_2(t)}\right)}}{1+b_1K_1(t)e^{\sigma_1M_1(t)}}.
\end{align*}
Hence, we can write
\begin{align*}
\mathbb{P}(X(t)\leq z_1)&\leq\mathbb{P}\left(\frac{k_1(t)e^{\sigma_1B_1(t)-\frac{c_1}{\beta b_2}\left(1+b_1K_1(t)e^{\sigma_1M_1(t)}\right)^{\frac{c_2}{\beta b_1}}\ln\left(1+b_2K_2(t)e^{\sigma_2M_2(t)}\right)}}{1+b_1K_1(t)e^{\sigma_1M_1(t)}}\leq z_1\right)\\ 
&=-4\int_{A_{z_1}\cap\{v_1>0, u_1<v_1, v_2>0\}} \mathcal{N}_{0,t}'(2v_1-u_1)\mathcal{N}_{0,t}(v_2)du_1dv_1dv_2,
\end{align*}
where 
\begin{align*}
A_{z_1}:=\left\{(u_1,v_1,v_2)\in\mathbb{R}^3:\frac{k_1(t)e^{\sigma_1u_1-\frac{c_1}{\beta b_2}\left(1+b_1K_1(t)e^{\sigma_1v_1}\right)^{\frac{c_2}{\beta b_1}}\ln\left(1+b_2K_2(t)e^{\sigma_2v_2}\right)}}{1+b_1K_1(t)e^{\sigma_1v_1}}\leq z_1\right\}.
\end{align*}
This coincides with (\ref{upper df X}). Moreover, from the lower estimate in (\ref{boundY}) we get
\begin{align*}
\mathbb{P}(Y(t)\leq z_2)\leq \mathbb{P}(L_2(t)\leq z_2);
\end{align*}
inequality (\ref{upper df logistic}) completes the proof of (\ref{upper df Y}). 
\end{enumerate}
\end{proof}

\section{Discussion}

In this paper, we propose a finite-time analysis for the solution of the two dimensional system (\ref{Mao system}) which describes a foraging arena model in presence of environmental noise. We derive in Theorem \ref{a.s. bounds} almost sure upper and lower bounds for the components on the solution vector; these bounds emphasis the interplay between the parameters describing the model and different sources of randomness involved in the system. While such relationship is hardly visible in the description of the asymptotic behaviour of the solution, our estimates agree, if let the time tend to infinity, with the classification in asymptotic regimes obtained by \cite{Mao foraging}: this is shown in details in Remark \ref{comparison with asymptotic}. The accuracy of our bounds, which are obtained via a careful use of comparison theorems for stochastic differential equations, is evident in the simulations below (see Figure \ref{plots}). There we plot for a given set of parameters the solution of the deterministic version of (\ref{Mao system}), i.e. with $\sigma_1=\sigma_2=0$, a computer simulation of the solution of the stochastic equation (\ref{Mao system}) for different noise intensities and the corresponding upper and lower bounds from Theorem \ref{a.s. bounds}.\\
Then, we utilize the bounds for the solution from Theorem \ref{a.s. bounds} to derive two sided estimates for some statistical aspects of the solution. More precisely, in Theorem \ref{main theorem moments} and Theorem \ref{main theorem d.f.} we propose upper and lower bounds for the joint moments and distribution function of the components of the solution vector, respectively. These estimates are expressed via integrals whose numerical approximation is pretty standard. Again, the roles of the parameters describing our model are explicitly described in the proposed estimates.

\begin{figure}[H]
	\centering
	\includegraphics[width=1.0\linewidth]{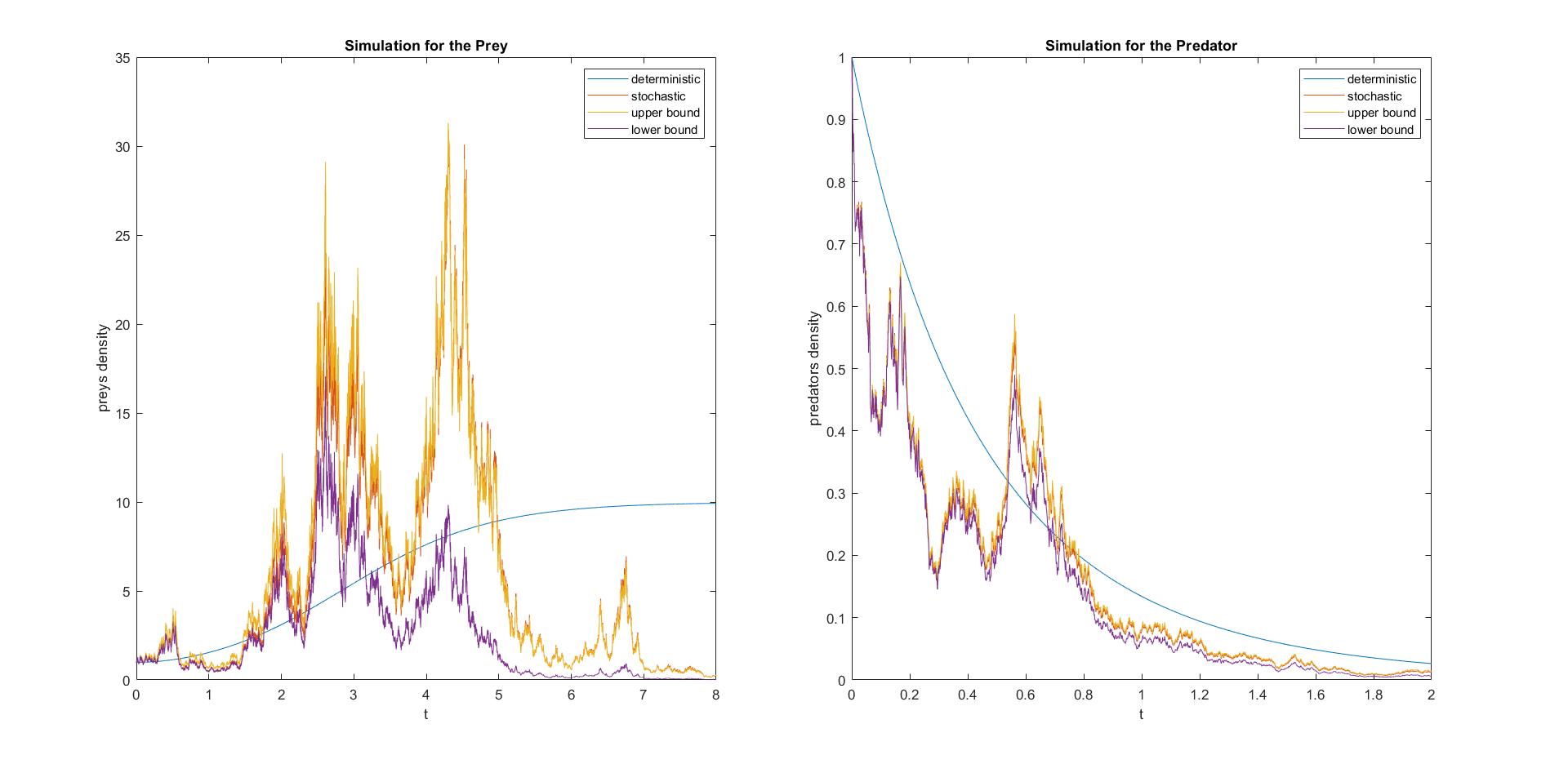}
	\includegraphics[width=1.0\linewidth]{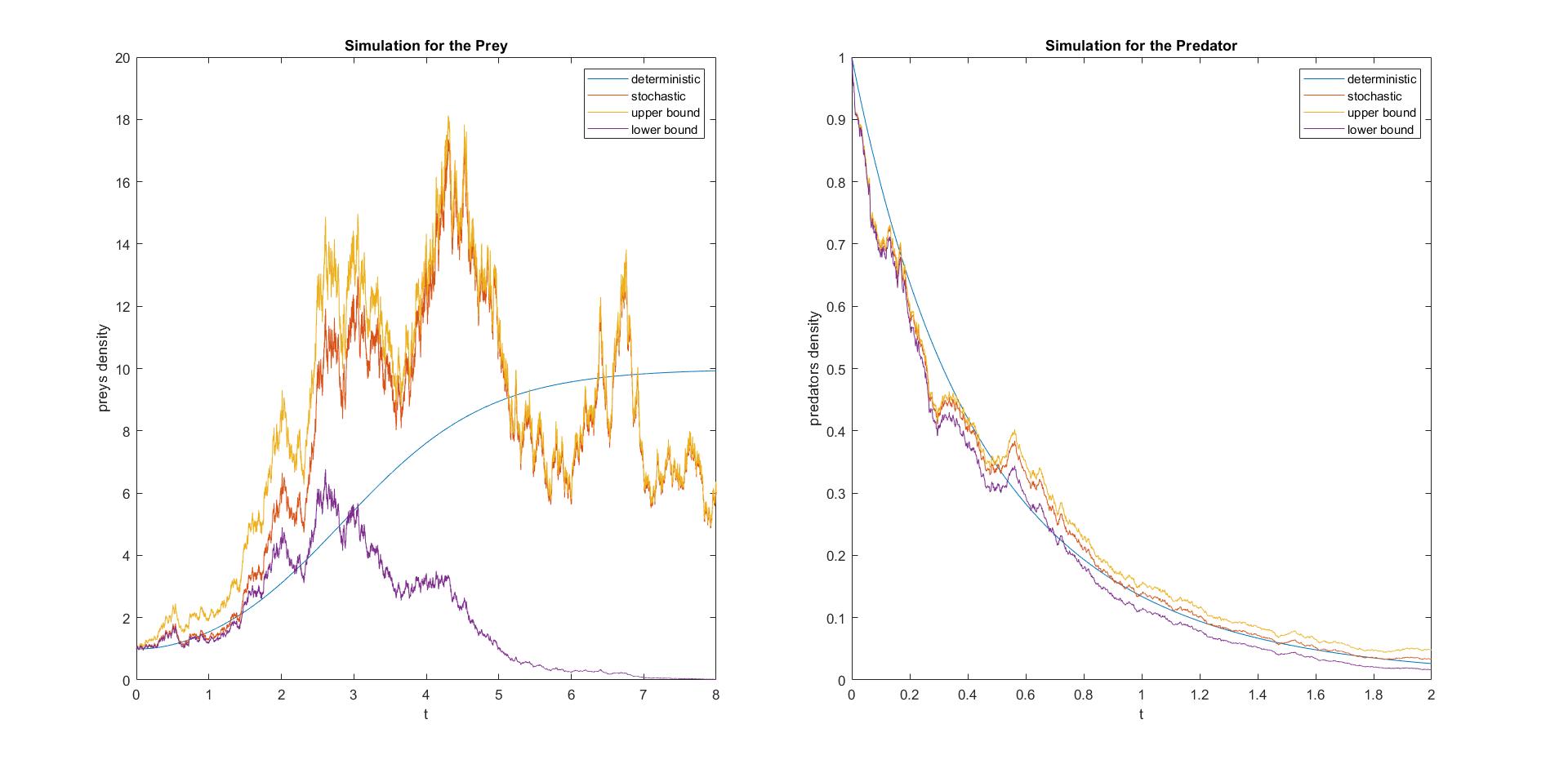}
	\caption{Comparing the paths of $X(t)$ (prey) and $Y(t)$ (predator) with the corresponding upper and lower bounds from Theorem \ref{a.s. bounds} for system \ref{Mao system} with $a_1=1$, $b_1=0.1$, $c_1=6$, $a_2=2$, $b_2=0.5$, $c_2=0.9$ and $\beta=5$ under different noise intensity: $\sigma_1=1.5$, $\sigma_2=1.3$ (top figures) and $\sigma_1=0.5$, $\sigma_2=0.3$ (bottom figures)}
	\label{plots}
\end{figure}


\begin{thebibliography}{9}

\bibitem{Abrams}	
P. A. Abrams, The fallacies of "ratio-dependent" predation, \emph{Ecology} 75 (1994) 1842-1850. doi:10.2307/1939644

\bibitem{Ahrens}
R. N. Ahrens, C. J. Walters and  V. Christensen, Foraging arena theory, \emph{Fish and
Fisheries} 13 (2012) 41-59. doi:10.1111/j.1467-2979.2011.00432.x

\bibitem{Akcakaya}
H. R. Akcakaya, R. Arditi and L. R. Ginzburg, Ratio-dependent predation: an
abstraction that works, \emph{Ecology} 76 (1995) 995-1004. doi:10.2307/1939362

\bibitem{Arditi}
R. Arditi, L. R. Ginzburg, Coupling in predator-prey dynamics: ratio dependence,
\emph{Journal of theoretical biology} 139 (1989) 311-326.

\bibitem{Beddington}
J. R. Beddington, Mutual interference between parasites or predators and its effect on searching efficiency, \emph{The Journal of Animal Ecology} (1975) 331-340.

\bibitem{BS}
A. Borodin and P. Salminen, \emph{Handbook of Brownian Motion - Facts and Formulae},
Second Edition, Springer, Basel, 2002. 

\bibitem{Mao foraging}
Y. Cai and  X. Mao, Stochastic prey-predator system with foraging
arena scheme, \emph{Applied Mathematical Modelling} (2018). https://doi.org/10.1016/j.apm.2018.07.034

\bibitem{Caravelli}
F. Caravelli, T. Mansour, L. Sindoni and S. Severini, On moments of the integrated exponential Brownian motion, \emph{Eur. Phys. J. Plus} 131 (2016). https://doi.org/10.1140/epjp/i2016-16245-9

\bibitem{Crowley}
P. H. Crowley and E. K. Martin, Functional responses and interference within and between year classes of a dragonfly population, \emph{Journal of the North American Benthological Society} 8 (1989) 211-221. 

\bibitem{Cufaro}
N. Cufaro Petroni, S. De Martino and S. De Siena, Logistic and $\theta$-logistic models in population dynamics: General analysis and exact results, \emph{arXiv:2004.10478} (2020).

\bibitem{DeAngelis}
D. L. DeAngelis, R. Goldstein, R. O’neill, A model for trophic interaction, \emph{Ecology} 56 (1975) 881–892. doi:10.2307/1936298.

\bibitem{Du}
N. H. Du, D. H. Nguyen and G. Georgeyin, Conditions for permanence and ergodicity
of certain stochastic predator–prey models, \emph{J. Appl. Prob.} 53 (2016) 187–202. doi:10.1017/jpr.2015.18

\bibitem{Dung}
N. T. Dung, On the small-time behavior of stochastic logistic models,
\emph{Stat., Optim. Inf. Comput.} 5 (2017) 234-243.

\bibitem{GVW}
J.S. Giet, P. Vallois and S. Wantz-Mezieres, The logistic S.D.E.,
\emph{Theory Stoch. Process.} 20 (2015) 28–62

\bibitem{Heath}
M. R. Heath, D. C. Speirs and J. H. Steele, Understanding patterns and processes
in models of trophic cascades, \emph{Ecology Letters} 17 (2014) 101-114. doi:10.1111/ele.12200.

\bibitem{Holling}
C. S. Holling, Some characteristics of simple types of predation and parasitism,
\emph{The Canadian Entomologist} 91 (1959) 385-398.

\bibitem{Ikeda Watanabe}
N. Ikeda and S.Watanabe, \emph{Stochastic Differential Equations and Diffusion Processes}, Second Edition, North Holland-Kodansha, 1989.

\bibitem{Ji}
C. Ji and D. Jiang, Dynamics of a stochastic density dependent predator–prey system with
Beddington–DeAngelis functional response, \emph{J. Math. Anal. Appl.} 381 (2011) 441-453.

\bibitem{Jiang}
D. Jiang and N. Shi, A note on nonautonomous logistic equation with random perturbation, \emph{J. Math. Anal. Appl.} 303 (2005) 164-172. 

\bibitem{KS}
I. Karatzas and S. E. Shreve, \emph{Brownian motion and stochastic calculus}, Springer, New York, 1991.

\bibitem{Kloeden Platen}
P. E. Kloeden and E. Platen, \emph{Numerical Solution of Stochastic Differential Equations}, Springer, Berlin, 1992.

\bibitem{Li}
S. Li and X. Zhang, Dynamics of a stochastic non-autonomous predator-prey system with
Beddington-DeAngelis functional response, \emph{Advances in Difference Equations} (2013). https://doi.org/10.1186/1687-1847-2013-19

\bibitem{Lotka}
A. J. Lotka, Contribution to quantitative parasitology, \emph{J. Wash. Acad. Sci.} 13 (1923)
152-158.

\bibitem{Mao book}
X. Mao, \emph{Stochastic Differential Equations and Applications}, Second edition, Horwood, Chichester, UK, 2008.

\bibitem{Rao}
F. Rao, S. Jiang, Y. Li and H. Liu, Stochastic Analysis of a Hassell-Varley Type Predation Model, Abstract and Applied Analysis, vol. 2013, Article ID 738342 (2013)  10 pages. https://doi.org/10.1155/2013/738342

\bibitem{Skalki}
G. Skalski and J. Gilliam, Functional responses with predator interference: viable alternatives to the Holling type II model, \emph{Ecology} 82 (2001) 3083-3092.

\bibitem{Sutherland}
W. J. Sutherland, Aggregation and the ‘ideal free’ distribution, \emph{Journal of Animal Ecology} 52 (1983)  821-828. 

\bibitem{Volterra}
V. Volterra, Variazioni e fluttuazioni del numero d'individui in specie animali conviventi.
\emph{Mem. Acad. Lincei} 2 (1926) 31-113.

\bibitem{Walters}
C. Walters, D. Pauly, V. Christensen and J. F. Kitchell, Representing density
dependent consequences of life history strategies in aquatic ecosystems: EcoSim II \emph{Ecosystems} 3 (2000) 70-83.

\bibitem{Yor}
M. Yor, \emph{Exponential Functionals of Brownian Motion and Related Processes}
Springer, Berlin, 2001.

\end{thebibliography}
\end{document}